\newtheorem{theorem}{Theorem}[section] 
\newtheorem{lemma}[theorem]{Lemma}
\newtheorem{OB}[theorem]{Observation}
\theoremstyle{remark}
\newtheorem{ex}[theorem]{Example}
\theoremstyle{definition}
\newtheorem{defi}[theorem]{Definition}
\newtheorem{algo}[theorem]{Algorithm}
\newcommand{\tens}[1]{\langle #1 \rangle}
\newcommand{\cc}{\mathbb C}
\newcommand{\rr}{\mathbb R}
\newcommand{\cO}{\mathcal O}
\newcommand{\ff}{\mathbb F}
\DeclareMathOperator{\SL}{SL}
\DeclareMathOperator{\PSL}{PSL}
\DeclareMathOperator{\supp}{supp}
\DeclareMathOperator{\Core}{Core}
\DeclareMathOperator{\img}{img}
\begin{document}

\title[Search and test algorithms for TPP triples]{Search and test algorithms for\\Triple Product Property triples}
\author{Ivo Hedtke}
\address{Mathematical Institute, University of Jena, D-07737 Jena, Germany}
\thanks{The first author was supported by the Studienstiftung des Deutschen Volkes.}
\email{Ivo.Hedtke@uni-jena.de}
\author{Sandeep Murthy}
\address{29 Stephen Road, Oxford OX3 9AY, England}
   \email{sandeepr.murthy@gmail.com}

\subjclass[2000]{Primary: 20-04, 68Q25, Secondary: 20D60, 68Q17, 68R05}

\maketitle

\begin{abstract}
In 2003 \textsc{Cohn} and \textsc{Umans} introduced a group-theoretic approach to fast matrix multiplication. This involves finding large subsets of a group $G$ satisfying the Triple Product Property (TPP) as a means to bound the exponent $\omega$ of matrix multiplication.

We present two new characterizations of the TPP, which are useful for theoretical considerations and for TPP test algorithms.  With this we describe all known TPP tests and implement them in \textsf{GAP} algorithms. We also compare their runtime. Furthermore we show that the search for subgroup TPP triples of nontrivial size in a nonabelian group can be restricted to the set of all nonnormal subgroups of that group. Finally we describe brute-force search algorithms for maximal subgroup and subset TPP triples. In addition we present the results of the subset brute-force search for all groups of order less than $25$ and selected results of the subgroup brute-force search for $2$-groups, $\SL_n\ff_q$ and $\PSL_2\ff_q$.
\end{abstract}

\section{Introduction}

\subsection{A Short History of Fast Matrix Multiplication}
\noindent The naive algorithm for matrix multiplication is an $\cO(n^3)$ algorithm. From \textsc{Volker Strassen} (see \cite{Strassen}) we know that there is an $\cO(n^{2.81})$ algorithm for this problem.
\textsc{Winograd} optimized \textsc{Strassen}'s algorithm. While the \textsc{Strassen-Winograd} algorithm is
the variant that is always implemented (for example in the famous GEMMW package),
there are faster ones (in theory) that are impractical to implement. The fastest
known algorithm runs in $\cO(n^{2.38})$ time (see \cite{Coppersmith} from \textsc{Coppersmith} and \textsc{Winograd}). Most researchers believe that an optimal
algorithm with $\cO(n^2)$ runtime exists, but since 1987 no further
progress was made in finding one. Because modern microprocessor architectures have complex memory hierarchies and increasing
parallelism, performance has become a complex tradeoff, not just a simple matter
of counting flops. Algorithms which make use of this technology were described in \cite{Alberto}. Another well known method is \emph{Tiling}: The normal algorithm can be speeded up
by a factor of two by using a six loop implementation that blocks submatrices so
that the data passes through the L1 Cache only once.

\subsection{The Exponent of the Matrix Multiplication}
\noindent Let $M(n)$ denote the number of field operations in characteristic $0$ required to multiply two $(n\times n)$ matrices. The exponent $\omega$ of the matrix multiplication is defined as
\begin{gather*}
\omega := \inf \{r \in \rr ~ : ~ M(n)=\cO(n^r)\}.
\end{gather*}
Details about the complexity of matrix multiplication and the exponent $\omega$ can be found in \cite{Burg}. The result from \textsc{Coppersmith} and \textsc{Winograd} says that $\omega < 2.38$.

\subsection{The Group-theoretic Approach from Cohn and Umans}
\noindent (The reader can find the necessary background on group- and representation theory in \cite{Alperin}, \cite{James}  and \cite{NeumannBook}. More details about the framework can be found in \cite{Cohn} and \cite{CohnZwei}.)

In 2003 \textsc{Cohn} and \textsc{Umans} introduced in \cite{Cohn} a group-theoretic approach to fast matrix multiplication. The main idea is to embed the matrix multiplication over a ring $R$ into the group ring $RG$, where $G$ is a (finite) group. A group $G$ admits such an embedding, if there are subsets $S$, $T$ and $U$ which fulfill the so-called \emph{Triple Product Property}. 

\begin{defi}[right quotient]
Let $G$ be a group and $\emptyset\neq X \subseteq G$ be a nonempty subset of $G$. The \emph{right quotient} $Q(X)$ of $X$ is defined by $Q(X):=\{xy^{-1} : x,y \in X\}$.
\end{defi}

Note that $Q(S)=S$ holds, if $S$ is a subgroup of $G$.

\begin{defi}[Triple Product Property]
We say that the nonempty subsets $S$, $T$ and $U$ of a group $G$ fulfill the \emph{Triple Product Property} (TPP) if for $s\in Q(S)$, $t\in Q(T)$ and $u \in Q(U)$,
$stu=1$ holds iff $s=t=u=1$.
\end{defi}

With $\tens{n,p,m}$ we denote the problem
\begin{gather*}
\tens{n,p,m} \colon \cc^{n\times p}\times \cc^{p\times m} \to \cc^{n\times m}, \quad (A,B)\mapsto AB,
\end{gather*}
of multiplying an $(n\times p)$ with a $(p\times m)$ matrix over $\cc$. We say that a group $G$ \emph{realizes} $\tens{s_1,s_2,s_3}$ if there are subsets $S_i\subseteq G$ of sizes $|S_i|=s_i$, which fulfill the TPP. In this case we call $(S_1,S_2,S_3)$ a \emph{TPP triple} of $G$.
Let us now focus on the embedding of the matrix multiplication into $\cc G$. Let $G$ realize $\tens{n,p,m}$ through the subsets $|S|=n$, $|T|=p$ and $|U|=m$. Let $A$ be an $(n\times p)$ and $B$ be a $(p\times m)$ matrix. We index the entries of $A$ and $B$ with the elements of $S$, $T$ and $U$ instead of numbers. Now we have
\[
(AB)_{s,u}=\sum\nolimits_{t \in T} A_{s,t}B_{t,u}.
\]
\textsc{Cohn} and \textsc{Umans} showed that this is the same as the coefficient of $s^{-1}u$ in the product
\begin{gather*}
\Big( \sum\nolimits_{s\in S, t\in T} A_{s,t}s^{-1}t\Big)
\Big( \sum\nolimits_{\hat t\in T,u\in U} B_{\hat t,u}\hat t^{-1}u\Big).
\end{gather*}
So we can read off the matrix product from the group ring product by looking at the coefficients of $s^{-1}u$ with $s\in S$ and $u\in U$.

\begin{defi}[TPP (subgroup) capacity]\label{def:TPPcap}
We define the \emph{TPP capacity} $\beta (G)$ of a nontrivial group $G$ as
$\beta(G) := \max\{npm : G\text{ realizes }\tens{n,p,m}\}$ and the \emph{TPP subgroup capacity} of $G$ as $\beta_\mathrm{g}(G) := \max\{npm : G\text{ realizes }\tens{n,p,m}\text{ through subgroups}\}$.
\end{defi}

Note that $\beta (G) \geq |G|$, because every group $G$ realizes $\tens{|G|,1,1}$ through the TPP triple $(G,1,1)$.

\begin{defi}[$r$-character capacity]\label{def:DG}
Let $G$ be a group with the character degrees $\{d_i\}$. We define the \emph{$r$-character capacity} of $G$ as $D_r(G):=\sum_i d_i^r$.
\end{defi}

We can now use $\beta$ and $D_r$ to get new bounds for $\omega$:

\begin{theorem}\textup{\cite[Thm. 4.1]{Cohn}}
If $G\neq 1$ is a finite group, then $\beta(G)^{\omega/3} \leq D_\omega (G)$.
\end{theorem}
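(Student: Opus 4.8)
The plan is to exhibit $\tens{n,p,m}$ as a restriction of the multiplication (structure) tensor of the group algebra $\cc G$ and then feed this into the standard machinery of bilinear complexity. First I would recall from the embedding described above that whenever $G$ realizes $\tens{n,p,m}$ through a TPP triple, the product of an $(n\times p)$ by a $(p\times m)$ matrix can be read off from a single product in $\cc G$; in tensor language this says precisely that the matrix multiplication tensor $\tens{n,p,m}$ is a restriction of the structure tensor of $\cc G$. Applying the Wedderburn--Artin decomposition $\cc G\cong\bigoplus_i M_{d_i}(\cc)$, where the $d_i$ are the character degrees of $G$, the structure tensor of $\cc G$ is the direct sum $\bigoplus_i\tens{d_i,d_i,d_i}$ of square matrix multiplication tensors. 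Hence $\tens{n,p,m}$ is a restriction of $\bigoplus_i\tens{d_i,d_i,d_i}$.

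Next I would pass to tensor powers to wash out constants. Writing $R$ for tensor rank, restriction is preserved under tensoring, so $\tens{n^N,p^N,m^N}$ is a restriction of $\big(\bigoplus_i\tens{d_i,d_i,d_i}\big)^{\otimes N}$; expanding the power and using that matrix multiplication tensors multiply as $\tens{a,b,c}\otimes\tens{a',b',c'}=\tens{aa',bb',cc'}$ rewrites the right-hand side as a direct sum, over multi-indices $(i_1,\dots,i_N)$, of the square tensors $\tens{D,D,D}$ with $D=\prod_j d_{i_j}$. Monotonicity of rank under restriction, subadditivity under $\oplus$, and the defining property of $\omega$ (for every $\varepsilon>0$ there is a constant $C_\varepsilon$ with $R(\tens{k,k,k})\le C_\varepsilon k^{\omega+\varepsilon}$) then yield $R\big(\tens{n^N,p^N,m^N}\big)\le C_\varepsilon\big(\sum_i d_i^{\omega+\varepsilon}\big)^N$.

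For the matching lower bound I would invoke the key structural fact from matrix-multiplication theory --- Sch\"onhage's asymptotic sum inequality, or equivalently the cyclic symmetry $\tens{a,b,c}\otimes\tens{b,c,a}\otimes\tens{c,a,b}=\tens{abc,abc,abc}$ --- which gives $(abc)^{\omega/3}\le R(\tens{a,b,c})$ for every rectangular format. Applied to $\tens{n^N,p^N,m^N}$ this reads $\big((npm)^{\omega/3}\big)^N\le R\big(\tens{n^N,p^N,m^N}\big)$. Chaining the two bounds, taking $N$-th roots and letting $N\to\infty$ removes $C_\varepsilon$, and letting $\varepsilon\to 0$ gives $(npm)^{\omega/3}\le\sum_i d_i^{\omega}=D_\omega(G)$. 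Finally I would take the maximum over all formats $\tens{n,p,m}$ that $G$ realizes; since $\beta(G)=\max npm$, this produces $\beta(G)^{\omega/3}\le D_\omega(G)$.

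The genuine obstacle is the lower bound $(abc)^{\omega/3}\le R(\tens{a,b,c})$: everything else (the TPP embedding, Wedderburn, and the bookkeeping with tensor powers and subadditivity) is routine, but this inequality is exactly the substantive content of Sch\"onhage's theorem and of the fact that $\omega$ governs rectangular as well as square products. I would therefore treat it as an imported result from the complexity theory of bilinear maps rather than reprove it, and the only care needed on our side is to verify that the restriction in the first step is genuine, so that ranks really are monotone, and that the limits in $N$ and $\varepsilon$ are taken in the right order.
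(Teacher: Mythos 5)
The paper offers no proof of this statement at all---it is imported wholesale from Cohn and Umans (cited as their Theorem 4.1)---so there is no internal proof to compare yours against; the fair comparison is with the original argument, and yours is essentially a correct reconstruction of it. The only structural difference is where the symmetrization lives: Cohn and Umans first invoke the symmetry of realization to conclude that $G^3$ realizes the square tensor $\tens{npm,npm,npm}$, whose character degrees are the products $d_id_jd_k$, and then run the tensor-power/$\varepsilon$-slack argument for square formats only, needing just Strassen's bound $k^\omega \le R(\tens{k,k,k})$; you keep $G$ fixed, take tensor powers of $\cc G$ itself, and import the rectangular bound $(abc)^{\omega/3}\le R(\tens{a,b,c})$, inside which exactly the same cyclic symmetrization $\tens{a,b,c}\otimes\tens{b,c,a}\otimes\tens{c,a,b}=\tens{abc,abc,abc}$ is hidden. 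The two routes are equivalent in content, your bookkeeping (rank monotone under restriction, subadditive under $\oplus$, submultiplicative under $\otimes$) is right, and your order of limits ($N\to\infty$ before $\varepsilon\to 0$) is the correct one. Two points should be made explicit if this were written out in full: first, the paper defines $\omega$ by counting field operations $M(n)$, whereas your argument runs entirely in terms of tensor rank, so you are also silently importing the standard (but not trivial) fact that the rank exponent coincides with the arithmetic-complexity exponent; second, the restriction claim you flag as ``needing verification'' is exactly what the embedding displayed in the paper's introduction provides, since $A\mapsto\sum_{s,t}A_{s,t}s^{-1}t$, $B\mapsto\sum_{\hat t,u}B_{\hat t,u}\hat t^{-1}u$, and extraction of the coefficients of $s^{-1}u$ are all linear maps, so the matrix multiplication tensor is a genuine restriction of the structure tensor of $\cc G$.
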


Note that this leads to a nontrivial upper bound for $\omega$, iff $\beta (G) > D_3(G)$. Therefore, for a fixed group $G$ we search for TPP triples $(S,T,U)$ which maximize $|S|\cdot |T|\cdot |U|$, for example with a brute-force computer search. Note that the maximal $|S|\cdot |T|\cdot |U|$ equals to $\beta(G)$.

With their group theoretic framework \textsc{Cohn} and \textsc{Umans} were able to construct an algorithm for matrix multiplication with running time $\mathcal O(n^{2.41})$, see \cite[Sec. 6.3]{CohnZwei}.

From \textsc{Cohn} and \textsc{Umans} we know (\cite[Lem. 3.1.]{Cohn}), that $\beta(G)=|G|$ if $G$ is an abelian group. Therefore we only focus on \emph{nonabelian groups}. We also only focus on \emph{finite groups}.

\subsection{The Aim of this Work}
\noindent In  this article we present two new characterizations of the TPP, which are useful for theoretical considerations and for TPP test algorithms. With this we describe all known TPP tests and implement them in \textsf{GAP}. We also compare their runtime. Furthermore we show that the search for subgroup TPP triples of nontrivial size in a nonabelian group can be restricted to the set of all nonnormal subgroups of that group. Finally we describe brute-force search algorithms for maximal subgroup and subset TPP triples. In addition we present the results of the subset brute-force search for all groups of order less than $25$ and selected results of the subgroup brute-force search for $2$-groups, $\SL_n\ff_q$ and $\PSL_2\ff_q$.

\section{Basic Concepts}
\label{sec:concepts}
\noindent In this section we present facts that we use in our search algorithms. Details about the subset and subgroup search can be found in section \ref{sec:SearchAlgos}.\enlargethispage{\baselineskip}
Depending on the context, the symbol $1$ will denote either the number $1$, the group identity $1_G$, or the trivial subgroup $\{1_G\}$.

\begin{lemma}\textup{\cite[Lem. 2 and 3 and Thm. 4]{Hedtke}}
\begin{enumerate}
\item[(*)] Let $\emptyset \neq X\subseteq G$ be a nonempty subset of a group $G$ and $g\in G$. Then $1\in Q(X)$ and $g \in Q(X) \Leftrightarrow g^{-1} \in Q(X)$.
\item[(**)] If $S$, $T$ and $U$ fulfill the TPP then $Q(X)\cap Q(Y)=1$ holds for all $X\neq Y \in \{S,T,U\}$.
\item[(***)] If $(S,T,U)$ is a TPP triple with $1\in S\cap T \cap U$, then $S\cap T = T\cap U = S\cap U = 1$.
\end{enumerate}
\end{lemma}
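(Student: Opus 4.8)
The plan is to prove the three parts in order, since (**) will use (*) and (***) will use both. For (*), I would begin from nonemptiness: choosing any $x \in X$, the element $xx^{-1} = 1$ witnesses $1 \in Q(X)$. For the equivalence, if $g = xy^{-1}$ with $x,y \in X$, then $g^{-1} = yx^{-1}$ is again the right quotient of two elements of $X$, so $g^{-1} \in Q(X)$; the converse is the same statement applied to $g^{-1}$, using $(g^{-1})^{-1} = g$.

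For (**), the idea is to feed a carefully chosen product into the TPP. Fix $X \neq Y$ in $\{S,T,U\}$ and take an arbitrary $g \in Q(X) \cap Q(Y)$. By (*) we have $g^{-1} \in Q(Y)$ as well, and $1$ lies in the right quotient of the third set. I would then build a product $stu = 1$ by placing $g$ in the slot for $X$, $g^{-1}$ in the slot for $Y$ (arranged so the two cancel), and $1$ in the remaining slot. Since the two nontrivial factors multiply to $1$, the whole product is $1$, and the TPP forces every factor to equal $1$; in particular $g = 1$. As $1 \in Q(X) \cap Q(Y)$ by (*), this yields $Q(X) \cap Q(Y) = 1$, and three such choices cover the three unordered pairs.

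The one point needing care — the closest thing to an obstacle — is matching the cancellation to the fixed left-to-right order $stu$ in the definition of the TPP. For the pairs $\{S,T\}$ and $\{T,U\}$ the two nontrivial factors sit adjacently and cancel directly, but for $\{S,U\}$ one must put the identity in the \emph{middle} ($t = 1$) so that $su = gg^{-1}$ still collapses. I would therefore verify each of the three pairs explicitly rather than claim symmetry uncritically.

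For (***), I would first note that $1 \in S$ implies $S \subseteq Q(S)$, because $x = x \cdot 1^{-1} \in Q(S)$ for every $x \in S$; likewise $T \subseteq Q(T)$ and $U \subseteq Q(U)$. Consequently any $g \in S \cap T$ lies in $Q(S) \cap Q(T)$, which equals $1$ by (**); hence $S \cap T \subseteq 1$, with equality since $1 \in S \cap T$ by hypothesis. Applying the identical argument to the pairs $\{S,U\}$ and $\{T,U\}$ completes the proof.
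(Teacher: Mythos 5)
Your proof is correct and complete: part (*) is the standard computation, your case-by-case placement of $g$, $g^{-1}$, and $1$ in the ordered product $stu$ handles all three pairs correctly (including the $\{S,U\}$ pair, where the identity must sit in the middle slot), and your observation that $1\in S\cap T\cap U$ gives $S\subseteq Q(S)$, $T\subseteq Q(T)$, $U\subseteq Q(U)$ is exactly what reduces (***) to (**). Note that the paper itself does not prove this lemma but imports it from the cited reference \cite{Hedtke}; your argument is the standard elementary one that this fact admits, so there is no substantive divergence to report.
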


\begin{lemma}\textup{\cite[Cor. 6]{Hedtke}} If $(S,T,U)$ is a TPP triple of $G$, then $|S|+|T|+|U|\leq |G|+2$.
\end{lemma}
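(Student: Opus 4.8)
The plan is to reduce to the convenient normalization $1\in S\cap T\cap U$ and then count with inclusion--exclusion. First I would observe that the Triple Product Property depends on $S,T,U$ only through their right quotients $Q(S),Q(T),Q(U)$, and that the right quotient is invariant under right translation: for any $a\in G$ one has $Q(Xa)=\{(xa)(ya)^{-1}:x,y\in X\}=\{xy^{-1}:x,y\in X\}=Q(X)$. Hence, fixing arbitrary elements $s_0\in S$, $t_0\in T$, $u_0\in U$ and replacing the triple $(S,T,U)$ by $(Ss_0^{-1},Tt_0^{-1},Uu_0^{-1})$ leaves all three right quotients unchanged, preserves the cardinalities $|S|,|T|,|U|$, and produces three sets each containing $1$. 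Thus it suffices to prove the bound under the extra assumption $1\in S\cap T\cap U$.

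With this normalization, Lemma (***) applies and gives $S\cap T=T\cap U=S\cap U=1$, so each pairwise intersection has exactly one element and the triple intersection $S\cap T\cap U$ also equals $\{1\}$. Next I would apply the inclusion--exclusion principle to the three sets:
\[
|S\cup T\cup U|=|S|+|T|+|U|-|S\cap T|-|T\cap U|-|S\cap U|+|S\cap T\cap U|.
\]
Substituting the four intersection cardinalities, each equal to $1$, yields $|S\cup T\cup U|=|S|+|T|+|U|-2$. Since $S\cup T\cup U\subseteq G$, we have $|S\cup T\cup U|\le|G|$, and rearranging gives the claim $|S|+|T|+|U|\le|G|+2$.

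I expect the only real subtlety to be the normalization step: one must check carefully that right translation does not change the right quotient, so that the TPP is genuinely preserved and the reduction to $1\in S\cap T\cap U$ is legitimate. Everything after that is a one-line counting argument, with Lemma (***) supplying precisely the information---pairwise and triple intersections equal to $\{1\}$---needed to evaluate inclusion--exclusion.
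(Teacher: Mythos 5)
Your proof is correct, and it is worth noting that the paper itself offers no proof of this lemma at all: it is quoted from \cite[Cor.~6]{Hedtke}, so there is no in-paper argument to compare against line by line. Your route is sound at every step: right translation fixes right quotients, since $Q(Xa)=\{(xa)(ya)^{-1}:x,y\in X\}=Q(X)$, and the TPP by definition constrains only $Q(S),Q(T),Q(U)$, so the reduction to a basic triple is legitimate (it is also a special case of Lemma~\ref{lemm:Neumann}, which you could have invoked instead of reproving the invariance); then (***) gives $S\cap T=T\cap U=S\cap U=\{1\}$, hence $S\cap T\cap U=\{1\}$, and inclusion--exclusion gives $|S\cup T\cup U|=|S|+|T|+|U|-2\le|G|$. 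An alternative argument, closer to the toolkit the paper actually quotes and presumably to the cited proof, avoids the normalization entirely: by (*) each quotient contains $1$, by (**) the quotients pairwise intersect in exactly $\{1\}$, and $|Q(X)|\ge|X|$ for any nonempty $X$ (for fixed $x_0\in X$ the map $x\mapsto xx_0^{-1}$ injects $X$ into $Q(X)$), so
\[
|G|\;\ge\;|Q(S)\cup Q(T)\cup Q(U)|\;=\;|Q(S)|+|Q(T)|+|Q(U)|-2\;\ge\;|S|+|T|+|U|-2.
\]
That version works directly with an arbitrary TPP triple and needs neither translation nor (***); your version buys a clean reduction to basic triples at the cost of verifying the (easy but essential) identity $Q(Xa)=Q(X)$, which you did carefully and correctly.
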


First we note, that we are only interested in matrix-matrix multiplication, that means, we consider only TPP triples $(S,T,U)$ with $|S|$, $|T|$, $|U|>1$. Furthermore we have: Assume that in a TPP triple of $G$ one of $S$, $T$ or $U$ is $G$ itself. Then it follows from the lemma above, that the other two sets of the triple have size $1$. Because we omit this case, we only use:

\begin{OB}\label{ob:NotOneOrG}
It is sufficient to search TPP triples with $|S|$, $|T|$, $|U| \in \{2,\ldots,|G|-1\}$.
\end{OB}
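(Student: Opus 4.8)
The plan is to treat the two bounds separately, since the lower bound is essentially a modeling restriction while the upper bound carries the actual argument. For the lower bound I would simply recall that we have already agreed to consider only genuine matrix-matrix products, i.e.\ TPP triples $(S,T,U)$ with $|S|,|T|,|U|>1$; a component of size $1$ corresponds to a degenerate matrix-vector (or scalar) product that we explicitly exclude. This immediately forces $|S|,|T|,|U|\geq 2$, so there is nothing further to prove on that side.

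The content lies in the upper bound $|S|,|T|,|U|\leq|G|-1$, and here I would argue by contradiction. Suppose one of the three sets --- say $S$, the other two cases being symmetric --- satisfies $|S|=|G|$. Since $S\subseteq G$ and $G$ is finite, a subset of cardinality $|G|$ must be all of $G$, so $S=G$. I would then invoke the size lemma stated just above (the corollary giving $|S|+|T|+|U|\leq|G|+2$ for any TPP triple). Substituting $|S|=|G|$ yields $|T|+|U|\leq 2$, and since $T$ and $U$ are nonempty this gives $|T|=|U|=1$. That contradicts the restriction to matrix-matrix triples, in which all three sizes exceed $1$. Hence, under that restriction, no component can have size $|G|$, and therefore each size is at most $|G|-1$.

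Combining the two bounds gives $|S|,|T|,|U|\in\{2,\dots,|G|-1\}$, as claimed; applying the same reasoning to $T$ and $U$ covers all cases, which is legitimate because both the Triple Product Property and the size lemma are symmetric in $S$, $T$ and $U$. I do not expect any serious obstacle here: the only point requiring a moment's care is recognizing that a full-cardinality subset of a finite group is forced to be the whole group, and that the resulting degeneracy is detected through the size lemma rather than through the Triple Product Property directly. The observation is genuinely a statement about pruning the search space rather than about where optima occur, so once the size lemma is in hand the proof is short.
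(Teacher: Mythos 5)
Your proof is correct and follows essentially the same route as the paper: the lower bound is the agreed matrix--matrix restriction $|S|,|T|,|U|>1$, and the upper bound comes from the size lemma $|S|+|T|+|U|\leq |G|+2$, which forces the other two sets to be singletons whenever one set is all of $G$, a case excluded by that same restriction. The only difference is cosmetic --- you spell out explicitly that a subset of cardinality $|G|$ must equal $G$, which the paper takes for granted by phrasing the case as ``one of $S$, $T$ or $U$ is $G$ itself.''
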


From \textsc{Cohn} and \textsc{Umans} (see \cite[Lem.~2.1]{Cohn}) we know the following symmetry property:
\begin{lemma}
If $G$ realizes $\tens{n,p,m}$, then it does so for every permutation of $n$, $p$ and $m$.
\end{lemma}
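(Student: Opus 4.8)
The plan is to show that the property $\mathrm{TPP}(S,T,U)$ of an \emph{ordered} triple is in fact invariant under every permutation of its three entries; the statement about realizing $\tens{n,p,m}$ then follows at once, since reordering the triple permutes the sizes $(|S|,|T|,|U|)$ in exactly the same way. First I would observe that in the definition of the TPP only the implication $stu=1\Rightarrow s=t=u=1$ carries content, the converse being trivial. The whole problem thus reduces to understanding how the relation $stu=1$ behaves when we reorder $s\in Q(S)$, $t\in Q(T)$, $u\in Q(U)$.

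The key ingredient is the symmetry of right quotients supplied by part (*) of the first lemma: for every nonempty $X$ we have $g\in Q(X)\Leftrightarrow g^{-1}\in Q(X)$, that is, $Q(X)=Q(X)^{-1}$. With this in hand I would exploit two elementary facts about the equation ``product $=1$'' valid in any group. The first is cyclic invariance, $abc=1\Leftrightarrow bca=1$: starting from $ust=1$ with $u\in Q(U)$, $s\in Q(S)$, $t\in Q(T)$ one rewrites it as $stu=1$ and applies $\mathrm{TPP}(S,T,U)$ to conclude $s=t=u=1$. This shows that the three cyclic rotations of $(S,T,U)$ all satisfy the TPP. The second fact is order reversal via inverses, $abc=1\Leftrightarrow c^{-1}b^{-1}a^{-1}=1$: from $stu=1$ one obtains $u^{-1}t^{-1}s^{-1}=1$, and since $u^{-1}\in Q(U)$, $t^{-1}\in Q(T)$, $s^{-1}\in Q(S)$ by the symmetry of quotients, this realizes the reversed order $(U,T,S)$. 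Hence the transposition reversing the triple also preserves the TPP.

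Since a $3$-cycle together with one transposition generate the full symmetric group on three letters, combining the two operations above shows that all six orderings of $(S,T,U)$ satisfy the TPP as soon as one of them does. To finish, suppose $G$ realizes $\tens{n,p,m}$ through subsets $S,T,U$ with $(|S|,|T|,|U|)=(n,p,m)$. For any prescribed permutation $\sigma$ I would simply reorder these \emph{same} three subsets accordingly; the reordered triple satisfies the TPP by the invariance just established, and its sizes form the corresponding permutation of $(n,p,m)$, so $G$ realizes $\tens{\sigma(n),\sigma(p),\sigma(m)}$.

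I expect no serious obstacle. The only point requiring care is that, because $G$ is nonabelian, one cannot permute $s$, $t$, $u$ freely inside the product; consequently both cyclic rotation and inversion (the latter relying essentially on $Q(X)=Q(X)^{-1}$) are genuinely needed in order to reach every element of the symmetric group.
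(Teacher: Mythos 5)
Your proof is correct. The paper itself gives no proof of this lemma --- it simply cites Cohn and Umans \cite[Lem.~2.1]{Cohn} --- and your argument is exactly the standard one behind that citation: the relation $stu=1$ is invariant under cyclic rotation, and under order reversal after taking inverses, which is legitimate because $Q(X)=Q(X)^{-1}$ by (*); since a $3$-cycle and a transposition generate $\Sym(3)$, all six orderings of $(S,T,U)$ satisfy the TPP, and the sizes permute accordingly.
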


We conclude the following:

\begin{OB}\label{ob:Order}
It is sufficient to search TPP triples with $|S| \geq |T| \geq |U|$.
\end{OB}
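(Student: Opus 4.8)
The plan is to obtain Observation~\ref{ob:Order} as a direct consequence of the preceding symmetry lemma (\cite[Lem.~2.1]{Cohn}) together with the definition of the TPP capacity. The point to establish is that restricting attention to triples with $|S|\geq|T|\geq|U|$ loses no information about which size-products $npm$ are realizable, and in particular that it still captures $\beta(G)$.

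First I would take an arbitrary TPP triple $(S,T,U)$ of $G$ and set $n=|S|$, $p=|T|$, $m=|U|$, so that $G$ realizes $\tens{n,p,m}$. Let $(n',p',m')$ be the tuple obtained by sorting $n,p,m$ into non-increasing order, so that $n'\geq p'\geq m'$. By the symmetry lemma, since $G$ realizes $\tens{n,p,m}$ it also realizes $\tens{n',p',m'}$; that is, there exist subsets $S',T',U'\subseteq G$ with $|S'|=n'$, $|T'|=p'$, $|U'|=m'$ fulfilling the TPP. Thus $(S',T',U')$ is a TPP triple with $|S'|\geq|T'|\geq|U'|$.

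Next I would note that the size-product is invariant under this reordering, since $n'p'm'=npm=|S|\,|T|\,|U|$. Consequently every value $npm$ realized by some TPP triple of $G$ is also realized by a TPP triple whose sizes are in non-increasing order. Hence the maximum $\beta(G)=\max\{npm : G\text{ realizes }\tens{n,p,m}\}$ is already attained within the subfamily of triples satisfying $|S|\geq|T|\geq|U|$, so it suffices to search only these.

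I do not expect a genuine obstacle here: all the real work is packaged in the symmetry lemma, whose proof rests on the cyclic invariance $stu=1\iff tus=1$ and on the closure of each right quotient $Q(X)$ under inversion (Lemma~(*)). The only point requiring a moment's care is conceptual rather than technical: the symmetry lemma yields the \emph{existence} of a triple realizing the sorted sizes, possibly built from different underlying sets, rather than merely relabelling $(S,T,U)$; but since the search is indexed by the sizes and their product, this is exactly the statement needed.
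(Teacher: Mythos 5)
Your proposal is correct and follows essentially the same route as the paper: the paper derives Observation~\ref{ob:Order} as an immediate consequence of the \textsc{Cohn}--\textsc{Umans} symmetry lemma \textup{\cite[Lem.~2.1]{Cohn}}, exactly as you do. The only difference is that the paper states the conclusion without elaboration, whereas you make explicit the (correct) sorting and product-invariance argument, including the subtlety that the reordered sizes may be realized by different underlying subsets.
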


We know, that $\beta(G)\geq |G|$, therefore we are only interested in
\begin{OB}\label{ob:MinimumBeta}
It is sufficient to search TPP triples with $|S| \cdot |T| \cdot |U| > |G|$.
\end{OB}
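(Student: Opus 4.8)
The plan is to read the statement as a claim about the search space for the capacity $\beta(G)$. By Definition~\ref{def:TPPcap}, $\beta(G)$ is the maximum of $|S|\cdot|T|\cdot|U|$ over all TPP triples of $G$, so it suffices to enumerate only those triples that could possibly attain a value larger than one already known to us. First I would recall the remark following Definition~\ref{def:TPPcap}: the triple $(G,1,1)$ is a TPP triple realizing $\tens{|G|,1,1}$, and hence the lower bound $\beta(G)\geq|G|$ is available before any search is carried out.

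I would then argue that every TPP triple $(S,T,U)$ with $|S|\cdot|T|\cdot|U|\leq|G|$ is redundant for this optimization: its product does not exceed the value $|G|$ already witnessed by $(G,1,1)$, so omitting such triples from the search cannot change the maximum we compute. Concretely, if the search returns no triple with product exceeding $|G|$, then $\beta(G)=|G|$ with the explicit witness $(G,1,1)$ in hand; and if some triple with $|S|\cdot|T|\cdot|U|>|G|$ exists, it already meets the imposed condition and is therefore not discarded. In either case the restriction to $|S|\cdot|T|\cdot|U|>|G|$ loses no information about $\beta(G)$, which is precisely the assertion.

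I do not expect a genuine technical obstacle here; the only point requiring care is the strictness of the inequality. Since the baseline $|G|$ is actually attained by $(G,1,1)$, and not merely bounded below, a newly found triple is interesting only if it strictly beats $|G|$, which is why the condition reads $|S|\cdot|T|\cdot|U|>|G|$ rather than $\geq|G|$. This is also consistent with Observations~\ref{ob:NotOneOrG} and~\ref{ob:Order}, which already remove the trivial triple $(G,1,1)$ itself from the enumerated candidates.
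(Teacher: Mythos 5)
Your proposal is correct and matches the paper's reasoning: the paper justifies this observation precisely by the bound $\beta(G)\geq|G|$, which comes from the trivial TPP triple $(G,1,1)$ realizing $\tens{|G|,1,1}$, so only triples whose product strictly exceeds $|G|$ can improve the known maximum. Your additional remark on the strictness of the inequality is a faithful elaboration of the same idea, not a different route.
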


Now we can combine the last two observations: We know that $|S| \cdot |T| \cdot |U| \leq |S|^3$. Therefore we have
\begin{OB}\label{ob:lower}
It is sufficient to search TPP triples with $|S| \geq \left\lceil\sqrt[3]{|G|}\right\rceil =: \ell (G)$,
where $\lceil x \rceil$ denotes the smallest integer not smaller than $x$.
\end{OB}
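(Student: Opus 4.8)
The plan is to derive the bound directly from the two immediately preceding observations, which is exactly why the statement is phrased as a combination of them. First I would invoke Observation~\ref{ob:Order} to assume without loss of generality that the triple is ordered so that $|S|\geq|T|\geq|U|$. Under this ordering both $|T|$ and $|U|$ are at most $|S|$, and hence the product obeys $|S|\cdot|T|\cdot|U|\leq |S|^{3}$.

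Next I would apply Observation~\ref{ob:MinimumBeta}, which restricts attention to triples satisfying $|S|\cdot|T|\cdot|U| > |G|$. Chaining the two inequalities gives $|G| < |S|\cdot|T|\cdot|U| \leq |S|^{3}$, so that $|S|^{3} > |G|$ and therefore $|S| > \sqrt[3]{|G|}$.

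Finally I would convert this strict real inequality into the desired integer bound. Since $|S|$ is a positive integer with $|S| > \sqrt[3]{|G|}$, it must be at least the smallest integer not smaller than $\sqrt[3]{|G|}$, i.e.\ $|S| \geq \lceil \sqrt[3]{|G|}\rceil = \ell(G)$. The only point needing a word of care — and the closest thing to an obstacle here — is the passage from the strict inequality to the ceiling. When $|G|$ is not a perfect cube, $\lceil\sqrt[3]{|G|}\rceil$ is precisely the least integer exceeding $\sqrt[3]{|G|}$, so the bound is immediate; when $|G|=k^{3}$ is a perfect cube, the strict inequality forces $|S|\geq k+1 > k = \ell(G)$, so the stated (slightly weaker) inequality holds a fortiori. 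Beyond this routine bookkeeping of the ceiling function there is no real difficulty, as the result is an elementary consequence of the earlier observations.
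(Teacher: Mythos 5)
Your proof is correct and follows essentially the same route as the paper, which obtains the bound by combining Observation~\ref{ob:Order} and Observation~\ref{ob:MinimumBeta} via $|G| < |S|\cdot|T|\cdot|U| \leq |S|^{3}$. Your additional care in passing from the strict inequality $|S| > \sqrt[3]{|G|}$ to the integer bound $|S| \geq \lceil\sqrt[3]{|G|}\rceil$ only makes explicit a step the paper leaves implicit.
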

Assume the case, where we try to find triples that yield a nontrivial upper bound for $\omega$ instead of only finding nontrivial TPP triples. In this case we change the $|G|$ in the last two observations to $D_3(G)$.

\begin{defi}[basic TPP triple]
According to \textsc{Neumann} we call a TPP triple $(S,T,U)$ that fulfills $1\in S \cap T \cap U$ a \emph{basic TPP triple}.
\end{defi}

From \textsc{Neumann} we know the following facts that reduce the search space enormously:
\begin{lemma}\textup{\cite[Obs. 2.1]{Neumann}}\label{lemm:Neumann}
If $(S,T,U)$ is a TPP triple of $G$, then $(dSa,dTb,dUc)$ is a TPP triple for all $a,b,c,d \in G$, too.
\end{lemma}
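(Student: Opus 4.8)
The plan is to reduce the entire claim to a single computation: the behaviour of the right quotient under left and right translation. First I would compute $Q(dXa)$ for a nonempty subset $X\subseteq G$ and arbitrary $a,d\in G$. For $x_1,x_2\in X$ the generic element of $Q(dXa)$ is $(dx_1a)(dx_2a)^{-1}=dx_1aa^{-1}x_2^{-1}d^{-1}=d(x_1x_2^{-1})d^{-1}$, so that
\[
Q(dXa)=d\,Q(X)\,d^{-1}.
\]
The key structural fact is that the right multiplier $a$ cancels completely and leaves no trace, while the left multiplier $d$ acts purely by conjugation. Applying this with $X=S,T,U$ and right multipliers $a,b,c$ respectively gives $Q(dSa)=dQ(S)d^{-1}$, $Q(dTb)=dQ(T)d^{-1}$ and $Q(dUc)=dQ(U)d^{-1}$.

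With this identity in hand I would verify the TPP condition for $(dSa,dTb,dUc)$ directly. Any $s'\in Q(dSa)$, $t'\in Q(dTb)$, $u'\in Q(dUc)$ can be written as $s'=dsd^{-1}$, $t'=dtd^{-1}$, $u'=dud^{-1}$ with $s\in Q(S)$, $t\in Q(T)$, $u\in Q(U)$. Then the product telescopes:
\[
s't'u'=(dsd^{-1})(dtd^{-1})(dud^{-1})=d\,(stu)\,d^{-1}.
\]
Hence $s't'u'=1$ if and only if $stu=1$, since conjugation by $d$ fixes the identity and is injective.

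Finally I would close the equivalence. By the TPP of $(S,T,U)$ the relation $stu=1$ holds iff $s=t=u=1$; and because $x\mapsto dxd^{-1}$ is a bijection sending $1$ to $1$, we have $s=1\Leftrightarrow s'=1$ and likewise for $t$ and $u$. Chaining these equivalences yields $s't'u'=1\Leftrightarrow s'=t'=u'=1$, which is precisely the TPP for $(dSa,dTb,dUc)$. I do not expect any genuine obstacle here: the whole content sits in the identity $Q(dXa)=dQ(X)d^{-1}$, and the only point requiring (minor) care is to record explicitly that conjugation by $d$ is an automorphism, so that the three separate scalar equivalences $s=1\Leftrightarrow s'=1$ (and the analogues for $t,u$) are legitimate, rather than only the single equivalence for the product $stu$.
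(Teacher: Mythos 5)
Your proof is correct. Note that the paper itself gives no proof of this lemma --- it is quoted verbatim from Neumann's paper (cited as Obs.~2.1 there) --- so there is nothing in the paper to compare against; you have in effect supplied the missing argument. Your route is the natural one: the single identity $Q(dXa)=d\,Q(X)\,d^{-1}$ (right translation cancels, left translation conjugates) reduces the claim to the fact that conjugation by $d$ is an automorphism fixing $1$, after which the chain $s't'u'=1 \Leftrightarrow stu=1 \Leftrightarrow s=t=u=1 \Leftrightarrow s'=t'=u'=1$ is immediate. The one point you rightly flag --- that the elementwise equivalences $s=1\Leftrightarrow s'=1$ need conjugation to be a bijection fixing $1$, not merely the product identity --- is exactly the detail that makes the final step legitimate, so the write-up is complete as it stands.
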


Note that any TPP triple can be translated to a basic
TPP triple by lemma~\ref{lemm:Neumann}. Together with (***) it follows, that:

\begin{OB}\label{ob:Intersec}
It is sufficient to search TPP triples with $S \cap T = T \cap U = S \cap U = 1$.
\end{OB}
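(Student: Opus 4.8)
The plan is to show that confining the search to TPP triples with trivial pairwise intersections discards no realizable dimension triple $\tens{n,p,m}$; since the search enumerates precisely which dimensions a group realizes, this establishes the claimed sufficiency. The mechanism is translation: because left and right multiplication by a fixed group element are bijections of $G$, they preserve the cardinalities $|S|$, $|T|$, $|U|$. So it is enough to exhibit, for every TPP triple, another TPP triple of the same three sizes whose pairwise intersections all equal $1$.

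First I would take an arbitrary TPP triple $(S,T,U)$ and normalize it to a basic triple by means of Lemma~\ref{lemm:Neumann}. Concretely, fix any $s_0\in S$, $t_0\in T$, $u_0\in U$ and apply the lemma with $d=1$, $a=s_0^{-1}$, $b=t_0^{-1}$, $c=u_0^{-1}$. Then $(Ss_0^{-1},Tt_0^{-1},Uu_0^{-1})$ is again a TPP triple of $G$, and it is basic: since $s_0s_0^{-1}=t_0t_0^{-1}=u_0u_0^{-1}=1$, we have $1\in Ss_0^{-1}\cap Tt_0^{-1}\cap Uu_0^{-1}$.

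Second I would feed this basic triple into property (***) of the first lemma, which immediately yields $Ss_0^{-1}\cap Tt_0^{-1}=Tt_0^{-1}\cap Uu_0^{-1}=Ss_0^{-1}\cap Uu_0^{-1}=1$. Finally, right multiplication by a fixed element is injective, so $|Ss_0^{-1}|=|S|$, $|Tt_0^{-1}|=|T|$ and $|Uu_0^{-1}|=|U|$; hence the normalized triple realizes the same $\tens{n,p,m}$ while meeting the required intersection condition. Chaining these observations shows that the realizable dimension triples coming from all TPP triples coincide with those coming from TPP triples with trivial pairwise intersections.

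I do not anticipate a serious obstacle, as the argument is a direct composition of Lemma~\ref{lemm:Neumann} and (***). The one point warranting care is the logical reading of \emph{sufficient to search}: one must check that the normalization preserves all three cardinalities \emph{simultaneously}. This holds because each set is right-translated independently by its own element, and each such translation is a bijection, so no realizable triple of sizes is lost when the search is restricted in this way.
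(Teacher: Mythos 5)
Your proposal is correct and follows exactly the paper's route: translate an arbitrary TPP triple to a basic one via Lemma~\ref{lemm:Neumann} (choosing $d=1$ and right multipliers $s_0^{-1}$, $t_0^{-1}$, $u_0^{-1}$), then apply (***) to conclude the pairwise intersections are trivial. Your additional remark that the translations are bijections preserving all three cardinalities is a useful explicit detail the paper leaves implicit, but the argument is the same.
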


\begin{lemma}\textup{\cite[Obs. 3.1]{Neumann}}
If $(S,T,U)$ is a TPP triple, then $|S|(|T|+|U|-1)\leq |G|$, $|T|(|S|+|U|-1)\leq G$ and $|U|(|S|+|T|-1)\leq |G|$.
\end{lemma}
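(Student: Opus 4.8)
The plan is to establish only the first inequality $|S|(|T|+|U|-1)\le |G|$; the remaining two then follow at once from the preceding symmetry lemma, which lets me permute the roles of $S$, $T$ and $U$ (applying the proven bound to the triples $(T,S,U)$ and $(U,S,T)$). Since the inequality constrains only the cardinalities $|S|$, $|T|$, $|U|$ and $|G|$, Lemma~\ref{lemm:Neumann} lets me replace $(S,T,U)$ by a suitable translate without changing these sizes, so I may assume without loss of generality that $(S,T,U)$ is a \emph{basic} TPP triple, i.e. $1\in S\cap T\cap U$. By (***) we then have $T\cap U=1$, so that $|T\cup U|=|T|+|U|-1$. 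It therefore suffices to exhibit an injection of $S\times(T\cup U)$ into $G$, since this yields $|S|(|T|+|U|-1)=|S|\cdot|T\cup U|\le |G|$.

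The map I would use is $\varphi\colon S\times(T\cup U)\to G$, $(s,x)\mapsto s^{-1}x$. To prove injectivity, suppose $s_1^{-1}x_1=s_2^{-1}x_2$ with $s_1,s_2\in S$ and $x_1,x_2\in T\cup U$; equivalently $s_2s_1^{-1}=x_2x_1^{-1}$. If $x_1,x_2$ both lie in $T$, then $s_2s_1^{-1}\in Q(S)$ and $x_2x_1^{-1}\in Q(T)$, so this common element lies in $Q(S)\cap Q(T)$, which is $1$ by (**); hence $s_1=s_2$ and $x_1=x_2$. The case $x_1,x_2\in U$ is identical, using $Q(S)\cap Q(U)=1$.

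The main obstacle is the mixed case, where $x_1$ and $x_2$ lie in different sets; by interchanging the two pairs if necessary, I may assume $x_1\in T$ and $x_2\in U$. Here the relation rearranges to $(s_2s_1^{-1})\,x_1\,x_2^{-1}=1$, and this is where the reduction to a basic triple pays off: since $1\in T$ and $1\in U$, I can read $x_1=x_1\cdot 1^{-1}\in Q(T)$ and $x_2^{-1}=1\cdot x_2^{-1}\in Q(U)$, while $s_2s_1^{-1}\in Q(S)$. Thus the product of a $Q(S)$-, a $Q(T)$- and a $Q(U)$-element is trivial, so the TPP forces each factor to be $1$: $s_1=s_2$, $x_1=1$ and $x_2=1$. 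In particular $x_1=x_2$ as elements of $T\cup U$, so $\varphi$ is injective. This establishes $|S|(|T|+|U|-1)\le |G|$, and permuting $S$, $T$ and $U$ delivers the remaining two inequalities.

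I expect the delicate point to be precisely this mixed case: the same-set cases are handled directly by (**), but the cross term only collapses once one has moved to a basic triple, which lets the stray factors $x_1$ and $x_2^{-1}$ be reinterpreted as genuine right quotients so that the full strength of the TPP can be applied.
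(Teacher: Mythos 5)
The paper does not prove this lemma at all: it is quoted directly from Neumann's note (\cite[Obs.~3.1]{Neumann}), so there is no internal proof to compare yours against. Your argument is correct and self-contained, and it uses only facts the paper has already quoted. The reduction to a basic triple via Lemma~\ref{lemm:Neumann} is legitimate since translating $(S,T,U)$ changes neither the TPP nor the cardinalities; then (***) gives $T\cap U=1$, hence $|T\cup U|=|T|+|U|-1$, and it suffices to show $(s,x)\mapsto s^{-1}x$ is injective on $S\times(T\cup U)$. Your case analysis is sound: when $x_1,x_2$ lie in the same set, the equality $s_2s_1^{-1}=x_2x_1^{-1}$ puts a common element in $Q(S)\cap Q(T)$ or $Q(S)\cap Q(U)$, which is trivial by (**); in the mixed case the rearranged identity $(s_2s_1^{-1})\,x_1\,x_2^{-1}=1$ is a genuine triple product with $s_2s_1^{-1}\in Q(S)$, $x_1=x_1\cdot 1^{-1}\in Q(T)$ and $x_2^{-1}=1\cdot x_2^{-1}\in Q(U)$, so the TPP forces all three factors to be trivial. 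You correctly identified that basicness is exactly what makes the mixed case work --- without $1\in T\cap U$ the stray factors $x_1$ and $x_2^{-1}$ need not be right quotients and the argument would collapse. Finally, deducing the other two inequalities from the permutation symmetry of the TPP (the Cohn--Umans lemma quoted in Section~\ref{sec:concepts}) is valid, since permuting a TPP triple again yields a TPP triple. This disjoint-translates counting argument is the standard proof of Neumann's observation, so your route is the expected one even though the paper itself only cites the result.
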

\begin{OB}\label{ob:Neumann}
It is sufficient to search TPP triples with $|S|(|T| + |U| - 1) \leq |G|$.
\end{OB}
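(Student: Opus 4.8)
The plan is to derive the statement directly from the preceding lemma \cite[Obs. 3.1]{Neumann}, whose three inequalities already guarantee that \emph{every} TPP triple satisfies all three bounds. In particular, every TPP triple satisfies $|S|(|T|+|U|-1)\le|G|$, so restricting the search to triples meeting this single inequality discards no TPP triple whatsoever; since we are merely imposing a \emph{necessary} condition for the TPP, the restriction can only remove triples that fail to be TPP triples in the first place. That observation alone establishes sufficiency.

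To explain why exactly this one of the three inequalities is the natural one to enforce, I would invoke the ordering $|S|\ge|T|\ge|U|$ from Observation~\ref{ob:Order} and show that, under this ordering, the first inequality is the strongest of the three, hence implies the other two. Writing $s=|S|$, $t=|T|$, $u=|U|$, I would compute the two differences of the left-hand sides,
\begin{gather*}
s(t+u-1)-t(s+u-1)=(s-t)(u-1),\qquad
s(t+u-1)-u(s+t-1)=(s-u)(t-1).
\end{gather*}
Both factors on the right are nonnegative, since $s\ge t\ge u$ and, by Observation~\ref{ob:NotOneOrG}, $t,u\ge 2>1$. Therefore $s(t+u-1)$ is the largest of the three left-hand sides, so $s(t+u-1)\le|G|$ implies $t(s+u-1)\le|G|$ and $u(s+t-1)\le|G|$, and is thus the most restrictive of the three pruning criteria available.

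There is no genuinely hard step here: the entire substance is contained in \cite[Obs. 3.1]{Neumann}, and the only thing left to verify is the elementary sign computation above confirming that the first inequality dominates the other two under the standing ordering. The sole point worth stating with care is the logical direction of the reduction: because $|S|(|T|+|U|-1)\le|G|$ holds for all TPP triples and, under $|S|\ge|T|\ge|U|$, subsumes the remaining two bounds, enforcing it as a filter in the search is both lossless and maximally effective.
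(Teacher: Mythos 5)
Your proof is correct and takes essentially the same route as the paper: the observation is an immediate consequence of the cited lemma \cite[Obs.~3.1]{Neumann}, since the inequality is a necessary condition for any TPP triple and imposing it therefore discards nothing. Your additional computation showing that, under the ordering $|S|\ge|T|\ge|U|$ of Observation~\ref{ob:Order}, the inequality $|S|(|T|+|U|-1)\le|G|$ dominates the other two (via the factorizations $(s-t)(u-1)\ge 0$ and $(s-u)(t-1)\ge 0$) is accurate and is a worthwhile supplement the paper leaves implicit, explaining why only this one of the three bounds needs to be enforced.
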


\begin{ex} Let us focus on a brute-force search for TPP triples of subgroups in the \textsc{Mathieu} group $M_{11}$. We pick out this example because it is a big group, note that $|M_{11}|=7920$, with many (exactly $8651$) subgroups. Possible orders for $S$, $T$ or $U$ are (computed with \textsf{GAP}): \[\{ 1, 2, 3, 4, 5, 6, 8, 9, 10, 11, 12, 16, 18, 20, 24, 36, 48, 55, 60, 72, 120, 144, 360, 660, 720, 7920\}=:\mathcal P.\] Note that $\mathcal P$ has $26$ elements. If we use observation \ref{ob:NotOneOrG}, we define $\mathcal P := \mathcal P \setminus \{1,7920\}$, so we have $|\mathcal P|=24$. A naive idea would be $(|S|,|T|,|U|)\in \mathcal P^3 =: C$. Because $|C|=13824$ we would have a large search space for a brute-force search. Now we use observation~\ref{ob:Order} and achieve $|C|=2600$. If we only search for subgroup TPP triples that yield a nontrivial bound for $\omega$, we use observation~\ref{ob:MinimumBeta} with $D_3(G)=355208$ (again computed with \textsf{GAP}) instead of $|G|$. With this it follows that $|C|=404$. Finally we use \textsc{Neumann}s inequality from observation~\ref{ob:Neumann}, which yields to $|C|=0$. This example shows that the observations above reduce the search space for a brute-force search enormously. In this case it is not even necessary to start a search.
\end{ex}

\section{Two New Characterizations of the Triple Product Property}
\noindent In this section we present two new characterizations of the TPP. They are useful for theoretical considerations (in particular theorem \ref{lem:TPPold}) and for TPP test algorithms which we discuss in section~\ref{sec:TPPtests}.

\begin{theorem}\label{lem:TPPold}
Three subsets of $G$ form a basic TPP triple $(S,T,U)$ iff
\begin{gather*}
{\rm (i)} ~~ 1 \in S\cap T \cap U,\qquad
{\rm (ii)} ~~ Q(T) \cap Q(U) = 1\qquad\text{and}\qquad
{\rm (iii)} ~~ Q(S) \cap Q(T)Q(U) = 1.
\end{gather*}
\end{theorem}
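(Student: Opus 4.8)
The plan is to prove both directions of the equivalence, translating the abstract TPP condition into statements about right quotients. The key observation throughout is that the defining equation $stu=1$ with $s\in Q(S)$, $t\in Q(T)$, $u\in Q(U)$ can be rewritten as $s = (tu)^{-1} = u^{-1}t^{-1}$, and that by part (*) of the first lemma each $Q(X)$ is symmetric (closed under inverses) and contains $1$. So I would begin by recording these facts, since they let me freely pass between $s=t^{-1}u^{-1}$-type rearrangements without worrying about whether the inverted elements still lie in the relevant quotients.

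For the forward direction, assume $(S,T,U)$ is a basic TPP triple. Condition (i) is immediate from the definition of \emph{basic}. Condition (ii) is exactly part (**) of the cited lemma specialized to the pair $T,U$, so it requires no new work. The substance is condition (iii): I would take any $s \in Q(S)$ lying in $Q(T)Q(U)$, write $s = tu$ with $t\in Q(T)$ and $u\in Q(U)$, and then observe that $s^{-1}(tu) = 1$. Using symmetry of $Q(S)$, the element $s^{-1}$ lies in $Q(S)$, so setting the TPP variables to $(s^{-1}, t, u)$ gives a relation of the form $stu=1$; the TPP then forces all three to be trivial, whence $s = 1$. This shows $Q(S)\cap Q(T)Q(U) = 1$.

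For the converse, assume (i), (ii), (iii) and verify the TPP directly. Suppose $s\in Q(S)$, $t\in Q(T)$, $u\in Q(U)$ satisfy $stu=1$. Then $s = (tu)^{-1} = u^{-1}t^{-1}$. The trick is to rearrange this into membership of a quotient product: since $s\in Q(S)$ and $s = u^{-1}t^{-1}$ expresses $s$ as a product of an element of $Q(U)$ and an element of $Q(T)$ (again using symmetry of the quotients), I get $s \in Q(S)\cap Q(T)Q(U)$, so (iii) yields $s=1$. With $s=1$ the relation collapses to $tu=1$, i.e. $t = u^{-1} \in Q(T)\cap Q(U)$, and (ii) forces $t=1$, hence $u=1$ as well. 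Thus $stu=1$ implies $s=t=u=1$, and the reverse implication is trivial, establishing the TPP.

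The main subtlety to handle carefully is bookkeeping with inverses: condition (iii) as stated involves $Q(T)Q(U)$ in that order, whereas the natural rearrangement of $stu=1$ produces $u^{-1}t^{-1}$, which is a $Q(U)Q(T)$ product. I expect the cleanest route is to exploit the symmetry from (*) to swap between $Q(T)Q(U)$ and $Q(U)Q(T)$ as needed, or equivalently to invert the whole equation $stu=1$ to $u^{-1}t^{-1}s^{-1}=1$ at the right moment. Getting the direction of these products to match the statement of (iii) exactly is the one place where a careless step would break the argument, so that is where I would be most deliberate.
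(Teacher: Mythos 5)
Your proof takes essentially the same route as the paper's own: the forward direction (apply the TPP to the triple $(s^{-1},t,u)$, using that $Q(S)$ is closed under inverses) is identical, and the converse is the same idea. However, the one step you yourself flagged as delicate is indeed where your main argument, as written, breaks. From $stu=1$ you get $s=u^{-1}t^{-1}$, which by symmetry of the individual quotients lies in $Q(U)Q(T)$ --- but this does \emph{not} put $s$ in $Q(T)Q(U)$: inversion-closedness of $Q(T)$ and $Q(U)$ makes each factor set symmetric, yet the two product sets $Q(U)Q(T)$ and $Q(T)Q(U)$ need not coincide in a nonabelian group (e.g.\ in $S_3$ with $Q(T)=\{1,(12)\}$ and $Q(U)=\{1,(13)\}$, one product set contains $(132)$ and the other $(123)$). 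So of your two proposed repairs, the first (``swap between $Q(T)Q(U)$ and $Q(U)Q(T)$ using symmetry'') is not valid as stated, while the second (invert the equation) is exactly right and is what the paper does: rewrite $stu=1$ as $s^{-1}=tu$, note that $s^{-1}\in Q(S)$ by (*) and that $tu\in Q(T)Q(U)$ with the factors already in the required order, so (iii) forces $s^{-1}=tu=1$, i.e.\ $s=1$; then $t=u^{-1}\in Q(T)\cap Q(U)$, and (ii) with (*) gives $t=u=1$. Equivalently, you could record once and for all that (iii) together with (*) implies $Q(S)\cap Q(U)Q(T)=1$, because inversion fixes $Q(S)$ setwise and maps $Q(U)Q(T)$ onto $Q(T)Q(U)$; either patch makes your argument complete.
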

This is not a limitation, because we only need to search for basic TPP triples.

\begin{proof}
First assume that $(S,T,U)$ is a basic TPP triple. (i) follows directly from the definition and (ii) from (**). Furthermore (*) implies that $1\in Q(S) \cap Q(T)Q(U)$. Now assume there is a common element $1 \neq x\in Q(S) \cap Q(T)Q(U)$.  Then $x=s=tu$, for some $s\in Q(S)$ and $tu\in Q(T)Q(U)$. This means $1=x^{-1}x=s^{-1}tu$, but the TPP for $(S,T,U)$ implies that $1=s^{-1}=t=u$ and therefore $x=1$, a contradiction.

Now assume that the equations (i)--(iii) hold for the subsets $S$, $T$ and $U$. Consider
the triple quotient product $stu$ for arbitrary
elements $s\in Q(S)$, $t\in Q(T)$ and
$u\in Q(U)$. Then $stu = 1$ is equivalent to $s^{-1}=tu$. Now (iii) implies that $s^{-1}=tu=1$ and (*) together with (ii) imply that $t=u=1$ and so $(S,T,U)$ is a basic TPP triple.
\end{proof}

\begin{defi}[subtransversal, support]
Let $C$ be a finite nonempty set and $\mathcal{C}=\{C_{1},\ldots,C_{k}\}$ a partition of it.
A set $X \subseteq C$ is called a \emph{subtransversal} for $\mathcal{C}$ with \emph{support}
$\supp_{\mathcal{C}}(X) = \mathcal{T} \subseteq \mathcal{C}$ if for all $C_i \in \mathcal{C}$
\[
\left|  X \cap C_i \right| = \begin{cases} 1 & C_i \in \mathcal{T}, \\ 0 & \text{otherwise.} \end{cases}
\]
It then follows that $\left| X \right| = \left| \mathcal{T} \right|$. In the special
case when the collection $\mathcal{C}$ is the set of left (or right) cosets of a subgroup
$S$ of a group $G$, then any subtransversal $T$ for $G/S$ (or $S \setminus G$) will simply be called
a subtransversal for $S$ in $G$.
\end{defi}

\begin{theorem}\label{th:subtrans}
Let $G$ be a group, $S$ a subgroup of $G$, and $T$, $U$ subsets of $G$.
\begin{enumerate}
\item If $(S,T,U)$ is a basic TPP triple of G then $T$ and $U$ are subtransversals
                         for $S$ in $G$ such that
\begin{gather}\label{eq:Green}
\supp_{S\setminus G}(T) \cap \supp_{S\setminus G}(U) = \{S\}.
\end{gather}
\item If $T$ and $U$ are also subgroups of $G$, and $T$ and $U$ are
                         subtransversals for $S$ in $G$ satisfying \eqref{eq:Green} then $(S,T,U)$ is
                         a TPP triple of $G$.
\end{enumerate}
\end{theorem}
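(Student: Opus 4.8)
The plan is to treat the two implications separately and to route everything through the characterization in Theorem~\ref{lem:TPPold} together with the basic properties (*), (**), (***). The simplification I would exploit throughout is that $S$ is a subgroup, so $Q(S)=S$, and that for a basic triple $1\in T\cap U$ yields $T\subseteq Q(T)$ and $U\subseteq Q(U)$; hence a single element of $T$ or of $U$ already lives in the corresponding right quotient, and products like $tu^{-1}$ can be fed directly into the TPP.

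For part (1), I would first show that $T$ is a subtransversal for $S$. If two elements $t_1,t_2\in T$ lay in a common right coset $Sg$, then $t_1t_2^{-1}\in S=Q(S)$ while also $t_1t_2^{-1}\in Q(T)$, so property (**) forces $t_1t_2^{-1}=1$, i.e. $t_1=t_2$; the same argument handles $U$. For the support condition \eqref{eq:Green}, the inclusion $\{S\}\subseteq\supp_{S\setminus G}(T)\cap\supp_{S\setminus G}(U)$ is immediate since $1\in T\cap U$ sits in the coset $S$. For the reverse inclusion I would argue by contradiction: a common coset $Sg\neq S$ would supply $t\in T$ and $u\in U$ with $t,u\notin S$ (in particular $t\neq 1$) and $St=Sg=Su$, so $tu^{-1}=\sigma\in S$. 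Then $\sigma^{-1}tu^{-1}=1$ with $\sigma^{-1}\in Q(S)$, $t\in Q(T)$ and $u^{-1}\in Q(U)$, and the TPP forces $t=1$, a contradiction.

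For part (2), the extra hypothesis that $T$ and $U$ are subgroups gives $Q(T)=T$ and $Q(U)=U$, so it suffices to verify the three conditions of Theorem~\ref{lem:TPPold}. Condition~(i) is clear since all three subgroups contain $1$, and the remaining two read $T\cap U=1$ and $S\cap TU=1$. To get $T\cap U=1$, a nontrivial $x\in T\cap U$ would place the coset $Sx$ in both supports, hence $Sx=S$ by \eqref{eq:Green}, so $x\in S$; but then $x$ and $1$ are two elements of $T$ in the coset $S$, and subtransversality forces $x=1$. For $S\cap TU=1$, a nontrivial $x=tu\in S$ gives $St=Su^{-1}$, a coset lying in $\supp_{S\setminus G}(T)\cap\supp_{S\setminus G}(U)$ and therefore equal to $S$; this puts $t\in S\cap T$ and $u^{-1}\in S\cap U$, and subtransversality at the identity coset collapses both to $1$, so $x=1$.

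The main obstacle I anticipate is the product-set condition~(iii) in part~(2): unlike the two intersection conditions it mixes $T$ and $U$ multiplicatively, and the argument depends essentially on both $T$ and $U$ being subgroups, so that $Q(T)Q(U)=TU$ and so that $u^{-1}\in U$ can be fed back into the support condition. The delicate point is to convert the membership $tu\in S$ into the single coset equality $St=Su^{-1}$ and then read off, via the uniqueness built into subtransversality at the coset $S$, that both factors are trivial.
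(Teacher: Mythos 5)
Your proposal is correct and follows essentially the same route as the paper: part (1) establishes subtransversality via $Q(S)\cap Q(T)=1$ and the support condition via the product $tu^{-1}\in S$, and part (2) verifies the three conditions of Theorem~\ref{lem:TPPold}, with your coset argument $St=Su^{-1}$ being exactly the step the paper compresses into ``$tu\in S$ is equivalent to $t\in Su^{-1}$, but then $t=u=1$.'' The only cosmetic difference is that in part (1) you apply the raw TPP definition to $\sigma^{-1}tu^{-1}=1$ where the paper cites condition (iii) of Theorem~\ref{lem:TPPold} directly; these are the same manipulation.
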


\begin{proof}
(i) We have $S\cap T=S\cap U=1$, by (***).
Let $S\neq Sr\in S\setminus G$ be a nontrivial coset of $S$.
Assume distinct elements $1\neq t,t'\in T \cap Sr$, where $t=sr$ and $t'=s'r$ for distinct $s,s'\in S$. 
Then $1\neq t't^{-1}=s's^{-1}\in S\cap Q(T)$, and that contradicts the TPP requirement (**). So for any coset $Sr\in S\setminus G$, we have $|T\cap Sr|\leq1$. The same holds for $U$.
Therefore $T$ and $U$ are subtransversals for $S\setminus G$.
Now assume distinct elements $1\neq t=sr\in T\cap Sr$ and $1\neq u=s'r\in U\cap Sr$ for a nontrivial coset $Sr\neq S$.
Then $1\neq tu^{-1}=s(s')^{-1} \in Q(S)\cap Q(T)Q(U) = S\cap Q(T)Q(U)$ and that contradicts theorem \ref{lem:TPPold}. This shows that $T$ and $U$ fulfill \eqref{eq:Green}.

(ii) Assume $T$ and $U$ are subtransversals of $S\setminus G$ and fulfill \eqref{eq:Green}. Since $S$, $T$ and $U$ are subgroups we have $1 \in S \cap T \cap U$ and $1\in S\cap TU$. Because the intersection of the supports of $T$ and $U$ is $\{S\}$, $T\cap U=1$ holds. Now assume that there is an $1\neq x \in S\cap TU$. So there are $s\in S$, $t\in T$ and $u\in U$ with $1\neq s = tu$. Therefore we have $tu\in S$, which is equivalent to $t\in Su^{-1}$. But then would be $t=u=1$, a contradiction. So $(S,T,U)$ is a TPP triple by theorem \ref{lem:TPPold}.
\end{proof}

\begin{OB}
Three subgroups $S$, $T$, $U$ of a group $G$ form a TPP triple iff $T$ and $U$
are subtransversals for $S$ in $G$, satisfying \eqref{eq:Green}.
\end{OB}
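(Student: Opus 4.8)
The plan is to derive this statement directly as a corollary of Theorem~\ref{th:subtrans}, since the two implications of the Observation correspond exactly to its two parts once we account for the difference between a TPP triple and a \emph{basic} TPP triple. No new combinatorial work is required; everything has been prepared in the theorem.

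First I would handle the forward direction. Suppose the subgroups $S$, $T$, $U$ form a TPP triple. Because each of $S$, $T$, $U$ is a subgroup, each contains the identity, so $1 \in S \cap T \cap U$; hence $(S,T,U)$ is in fact a basic TPP triple. Since $S$ is a subgroup, Theorem~\ref{th:subtrans}(i) now applies verbatim and yields that $T$ and $U$ are subtransversals for $S$ in $G$ satisfying \eqref{eq:Green}.

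For the reverse direction, suppose $T$ and $U$ are subtransversals for $S$ in $G$ satisfying \eqref{eq:Green}. Here all three of $S$, $T$, $U$ are assumed to be subgroups, which is precisely the extra hypothesis demanded by Theorem~\ref{th:subtrans}(ii). Applying that part directly gives that $(S,T,U)$ is a TPP triple of $G$, completing the equivalence.

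The only subtlety --- and it is a point I would flag rather than a genuine obstacle --- is the passage between ``TPP triple'' and ``basic TPP triple'' in the forward direction. Theorem~\ref{th:subtrans}(i) is stated for basic TPP triples, whereas the Observation speaks of an arbitrary TPP triple of subgroups. The resolution is that the subgroup hypothesis forces $1 \in S \cap T \cap U$ automatically, so no translation by group elements (as in Lemma~\ref{lemm:Neumann}) is needed and the basic condition comes for free. With that remark in place, the Observation is an immediate consequence of the theorem.
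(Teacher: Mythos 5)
Your proposal is correct and matches the paper's intent exactly: the Observation is stated as an immediate corollary of Theorem~\ref{th:subtrans}, with the forward direction using precisely the fact that subgroups all contain the identity (so the triple is automatically basic, making part (i) applicable) and the reverse direction being part (ii) verbatim. Your explicit flagging of the ``TPP triple'' versus ``basic TPP triple'' point is the only reasoning the paper leaves implicit, and you resolved it the right way.
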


\begin{theorem}
Let $G$ be a group.
If $(S,T,U)$ is a TPP triple of subgroups where at least one of $S$, $T$ or $U$ is normal in $G$, then $|S|\cdot |T| \cdot |U| \leq |G|$.
\end{theorem}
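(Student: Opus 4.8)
The plan is to reduce to the case where the normal subgroup plays the role of the base subgroup $S$ in theorem \ref{th:subtrans}, then pass to the quotient group $G/S$ and use the classical order formula for a product of two subgroups.

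First I would invoke the permutation symmetry of the TPP (\cite[Lem.~2.1]{Cohn}): since $(S,T,U)$ is a TPP triple iff every permutation of it is, and since the target inequality $|S|\cdot|T|\cdot|U|\leq|G|$ is symmetric in the three subgroups, I may assume without loss of generality that $S$ is the subgroup that is normal in $G$. Because $S$, $T$, $U$ are subgroups they all contain $1$, so $(S,T,U)$ is automatically a basic TPP triple, and by (***) we have $S\cap T=S\cap U=1$.

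Next I would consider the canonical projection $\pi\colon G\to G/S$, which is available precisely because $S\trianglelefteq G$. Restricted to the subgroup $T$ its kernel is $S\cap T=1$, so $\pi|_T$ is injective and $\pi(T)$ is a subgroup of $G/S$ with $|\pi(T)|=|T|$; likewise $|\pi(U)|=|U|$. The step that needs care is translating the support condition \eqref{eq:Green}: since $S$ is normal its left and right cosets coincide, so the cosets met by $T$ correspond bijectively to the elements of $\pi(T)$, and $\supp_{S\setminus G}(T)$ (resp.\ of $U$) is carried onto $\pi(T)$ (resp.\ $\pi(U)$). Hence \eqref{eq:Green} becomes exactly $\pi(T)\cap\pi(U)=\{1\}$ in $G/S$.

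Finally I would apply the order formula for the product of two subgroups of a finite group: since $\pi(T),\pi(U)\leq G/S$ have trivial intersection,
\[
|\pi(T)\,\pi(U)| = \frac{|\pi(T)|\cdot|\pi(U)|}{|\pi(T)\cap\pi(U)|} = |T|\cdot|U|.
\]
As $\pi(T)\pi(U)$ is a subset of $G/S$, this yields $|T|\cdot|U|\leq[G:S]=|G|/|S|$, i.e.\ $|S|\cdot|T|\cdot|U|\leq|G|$. I expect the only genuinely delicate point to be the faithful translation of the coset-support condition into a trivial-intersection statement inside $G/S$; once normality forces left and right cosets to agree, the remainder is the standard counting identity.
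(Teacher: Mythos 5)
Your proof is correct, and it follows the same overall strategy as the paper's: make the normal subgroup the first entry $S$, pass to $G/S$, and show that $T$ and $U$ give rise to trivially intersecting subgroups of $G/S$ of orders $|T|$ and $|U|$. The two proofs differ in execution, though, and in both places your version is the more economical one. Where the paper defines $\mathcal{T}:=\supp_{G/S}(T)$ and $\mathcal{U}:=\supp_{G/S}(U)$ and verifies by explicit coset computations that they are closed under products and inverses, you identify these supports with the images $\pi(T)$ and $\pi(U)$ under the canonical projection $\pi\colon G\to G/S$, so that they are subgroups automatically and $|\pi(T)|=|T|$ follows from $\ker(\pi|_T)=S\cap T=1$, i.e.\ from (***); this recovers the content of the paper's hand computation for free. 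And where the paper packages the quotient data as a TPP triple $(\mathcal{T},\mathcal{U},1)$ of $G/S$ via Theorem~\ref{lem:TPPold} and then applies \textsc{Neumann}'s inequality, you finish with the elementary product formula $|\pi(T)\pi(U)|=|\pi(T)|\cdot|\pi(U)|/|\pi(T)\cap\pi(U)|$ together with the trivial inclusion $\pi(T)\pi(U)\subseteq G/S$. So after Theorem~\ref{th:subtrans}(i), which you use exactly as the paper does to obtain \eqref{eq:Green}, your argument needs nothing TPP-specific; in fact you could even bypass \eqref{eq:Green} entirely, since $\pi(t)=\pi(u)$ gives $tu^{-1}=s\in S$, hence $s^{-1}tu^{-1}=1$, and the TPP for the subgroup triple forces $t=u=1$. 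A minor further gain is that your projection argument is valid verbatim when $S=1$ or $S=G$, so the paper's tacit reduction to $S$ nontrivial and proper is not needed.
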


\begin{proof}
Without loss of generality assume that $S$ is nontrivial, proper and normal in $G$. Because $S$ is normal in $G$, we have $S\setminus G = G/S$. We will work with $G/S$. From theorem~\ref{th:subtrans} we know, that $T$ and $U$ are subtransversals for $G/S$ that fulfill \eqref{eq:Green}.
Let $S_1:=S,S_2,\ldots,S_v$ be the $v=[G:S]\geq 2$ elements of $G/S$. We define $\mathcal T := \supp_{G/S}(T)\subseteq G/S$ and $\mathcal U$ in the same way.
Then $\mathcal T$ and $\mathcal U$ are subgroups of $G/S$ with $|\mathcal T|=|T|$, $|\mathcal U|=|U|$ and $\mathcal T \cap \mathcal U=1$.
To see this, first consider $\mathcal T$.
Because $T$ and $U$ are subtransversals for $G/S$ that fulfill \eqref{eq:Green}, we have $1_{G/S}=S\in \mathcal T$. 
Now let $rS, r'S\in \mathcal T$ be cosets of $S$. So there are $t,t'\in T$ such that $T\cap rS=\{t=rs\}$ and $T\cap r'S=\{t'=r's'\}$, where $s,s'\in S$. They have the product $rsr's'=tt'\in T \cap rr'S$ and so $|T\cap rr'S| \geq 1$. Since $T$ is a subtransversal for $G/S$ it follows that $|T\cap rr'S| = 1$ and so $rr'S\in \mathcal T$. The inverse of any given $rS\in \mathcal T$ is $r^{-1}S\in G/S$ and if $t\in T$ is such that $T\cap rS=\{t\}$
then $t=rs$ for some $s\in S$, and $t^{-1}=s^{-1}r^{-1}\in Sr^{-1}=r^{-1}S$
(because $S$ is normal), and so $t^{-1}\in T\cap r^{-1}S$,
and we can conclude that $r^{-1}S\in\mathcal{T}$ as well. The same holds for $\mathcal U$.
Because \eqref{eq:Green} we know that $\mathcal T \cap \mathcal U=1$ and so $(\mathcal T, \mathcal U,1)$ is a TPP triple of $G/S$ by theorem~\ref{lem:TPPold}. From \textsc{Neumann}s inequality (see lemma \ref{lemm:Neumann}) it follows that $|\mathcal T|(|\mathcal U|+1-1) = |\mathcal T|\cdot |\mathcal U| = |T| \cdot |U| \leq |G/S|= |G|/|S|$, which we wanted to show.
\end{proof}

We can use the result above to create an additional filter for the search space of subgroup TPP triples:

\begin{OB}\label{OB:NONNORMAL}
If we are only interested in subgroup TPP triples of nontrivial size it is sufficient to search for $S$, $T$ and $U$ that are all nonnormal.
\end{OB}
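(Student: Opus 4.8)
The plan is to derive this observation directly as the contrapositive of the theorem just proved. First I would fix what ``nontrivial size'' means in this context: by Observation~\ref{ob:MinimumBeta}, the only triples worth searching are those with $|S|\cdot|T|\cdot|U| > |G|$, since $\beta(G)\geq|G|$ holds for every group and any triple failing this bound yields nothing beyond the trivial realization $\tens{|G|,1,1}$. So I take ``nontrivial size'' to mean $|S|\cdot|T|\cdot|U| > |G|$.

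Next I would invoke Observation~\ref{ob:NotOneOrG} to record that we may restrict to $|S|,|T|,|U|\in\{2,\ldots,|G|-1\}$, so that each of $S$, $T$, $U$ is a proper nontrivial subgroup of $G$. This is precisely the setting in which the preceding theorem applies to a normal member, whose hypotheses (nontrivial and proper) are then automatically met.

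The key step is the contrapositive itself. The preceding theorem states that if at least one of $S$, $T$, $U$ is normal in $G$, then $|S|\cdot|T|\cdot|U|\leq|G|$. Hence if a subgroup TPP triple satisfies $|S|\cdot|T|\cdot|U|>|G|$, none of its three members can be normal; that is, $S$, $T$ and $U$ must all be nonnormal. Restricting the brute-force search to triples of nonnormal subgroups therefore discards no triple of nontrivial size, which is exactly the claim.

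There is no substantial obstacle here, as the statement is an immediate logical consequence of the theorem. The only point requiring a moment's care is the boundary bookkeeping: one must confirm that a normal member of a nontrivial-size triple is necessarily nontrivial and proper, so that the theorem genuinely applies to it. This is guaranteed by the size constraints $|S|,|T|,|U|\geq 2$ and $|S|,|T|,|U|\leq |G|-1$ coming from Observation~\ref{ob:NotOneOrG}, which rule out the degenerate normal subgroups $1$ and $G$.
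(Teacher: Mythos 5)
Your proof is correct and is essentially the paper's own argument: the paper presents this observation as an immediate consequence (the contrapositive) of the preceding theorem on normal subgroups, exactly as you do. Your extra bookkeeping---checking via the size restrictions that a normal member of a nontrivial-size triple is automatically nontrivial and proper, so that the theorem (whose proof assumes this without loss of generality) genuinely applies---is sound care rather than a different route.
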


\section{On the TPP Capacity of Nonabelian Groups}
\noindent In this section we present some facts about the TPP capacity of nonabelian groups.
The following two results state nontrivial lower bounds for $\beta(G)$.
The ideas behind the proofs are those of the authors, but the proofs
as formulated here are those of \textsc{Neumann}.

\begin{lemma}\label{lem:NonAB}
If $G$ is a nonabelian group with a nonnormal subgroup $S$ of index $[G:S]=3$, then $\beta(G) \geq \frac43 |G|$.
\end{lemma}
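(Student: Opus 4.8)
The plan is to produce an explicit TPP triple built on $S$ whose size product already reaches $\tfrac43|G|$. Since $[G:S]=3$ we have $|S|=|G|/3$, so it suffices to find two $2$-element subsets $T,U$ for which $(S,T,U)$ is a basic TPP triple: then $|S|\cdot|T|\cdot|U| = 4|S| = \tfrac43|G|$, and the definition of $\beta$ gives the claim. I would take $T=\{1,t\}$ and $U=\{1,u\}$ and verify the three conditions of Theorem~\ref{lem:TPPold}, so the whole problem reduces to choosing $t$ and $u$ well.

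The tool for choosing them is the action of $G$ on the three right cosets $S\setminus G$, giving a homomorphism $\phi\colon G\to\Sym(3)$ whose image is transitive and whose stabiliser of the base coset $S$ is exactly $S$ (so $g\in S$ iff $\phi(g)$ fixes the base point, and in general $g$ lies in the coset labelled $1^{\phi(g)}$). First I would show that $\phi$ is onto: its image is a transitive subgroup of $\Sym(3)$, hence either $C_3$ or $\Sym(3)$; if it were $C_3$, then $\ker\phi=\Core_G(S)$ would have order $|G|/3=|S|$ and therefore equal $S$, forcing $S$ to be normal and contradicting the hypothesis. This is exactly where nonnormality enters. I then pick $t,u\in G$ with $\phi(t)=(1\,2)$ and $\phi(u)=(1\,3)$, the two transpositions moving the base point; this already gives $|T|=|U|=2$ and $1\in S\cap T\cap U$, i.e.\ condition (i).

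The remaining verification is bookkeeping on where the base point is sent. For (ii) we have $Q(T)=\{1,t,t^{-1}\}$ and $Q(U)=\{1,u,u^{-1}\}$; because $(1\,2)$ and $(1\,3)$ are involutions, $t,t^{-1}$ both lie in coset $2$ while $u,u^{-1}$ both lie in coset $3$, so the two quotient sets meet only in $1$. For (iii), $Q(S)=S$, and one checks that each of the nine products in $Q(T)Q(U)$ other than $1$ avoids $S$: the relevant image permutations are $e,(1\,2),(1\,3)$ and $(1\,2)(1\,3)=(1\,2\,3)$, and every nontrivial product sends the base point to $2$ or $3$, hence is not in $S$. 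Theorem~\ref{lem:TPPold} then certifies $(S,T,U)$ as a basic TPP triple.

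The one genuinely delicate step, and the main obstacle, is condition (iii): it is not enough that $t$ and $u$ sit in different nontrivial cosets; one must control the inverses $t^{-1},u^{-1}$ and all four mixed products $t^{\pm1}u^{\pm1}$ at once. Choosing the images of $t$ and $u$ to be \emph{involutions} is precisely what keeps $t^{-1}$ (resp.\ $u^{-1}$) in the same nontrivial coset as $t$ (resp.\ $u$) and pins down the mixed products, and this is only possible once $\phi$ is known to be surjective. Had the image been merely $C_3$ (the normal case), no such involutions would exist, which dovetails with the preceding theorem bounding $|S|\cdot|T|\cdot|U|$ by $|G|$ when one factor is normal.
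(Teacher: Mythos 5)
Your proof is correct, and it takes a genuinely different route from the paper's after the shared opening move. Both arguments start the same way: the right-coset action gives $\phi\colon G\to S_3$ with kernel $\Core_G(S)\leq S$, and nonnormality of $S$ forces the transitive image to be all of $S_3$ rather than $A_3$. From there the paper goes abstract: it invokes the Cohn--Umans submultiplicativity bound $\beta(G)\geq\beta(G/K)\beta(K)$ for $K=\Core_G(S)$ (cited, not proved in this paper), reads off $\beta(S_3)=8$ from Table \ref{tab:small}, and computes $\beta(G)\geq 8|K|=\frac{8}{6}|G|=\frac43|G|$. You instead stay inside $G$ and construct an explicit basic TPP triple $(S,\{1,t\},\{1,u\})$ from preimages $t,u$ of the transpositions $(1\,2)$ and $(1\,3)$, certifying it via conditions (i)--(iii) of Theorem \ref{lem:TPPold}; your bookkeeping is sound, since the involutive images keep $t^{\pm1}$ in one fixed nontrivial coset, $u^{\pm1}$ in another (giving (ii)), and send all four mixed products $t^{\pm1}u^{\pm1}$ to a $3$-cycle moving the base point, so nothing in $Q(T)Q(U)$ except $1$ lands in $S$ (giving (iii)). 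What each approach buys: the paper's version is shorter given its imported ingredients, makes the constant transparent ($\tfrac43=\beta(S_3)/|S_3|$), and would give more whenever $\beta(K)>|K|$; yours is self-contained apart from Theorem \ref{lem:TPPold} (which this paper proves), avoids both the external Cohn--Umans lemma and the computed table value, and exhibits a concrete witness realizing $\tens{|G|/3,2,2}$ --- essentially the same realization one would obtain by unwinding the paper's quotient argument, but produced directly.
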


\begin{proof}
Let $S\setminus G$ be the right coset space of $G$ of size $[G:S]=3$. There is a natural homomorphism $\phi \colon G \to S_3$, defined by $g\mapsto \pi_g$ for all $g\in G$, where $\pi_g$ describes a permutation action $(g,Sr)\mapsto Srg=Sr_g \in S\setminus G$ for all $Sr\in S\setminus G$, of $g$ on $S\setminus G$. The homomorphism $\phi$ has a kernel $K:=\ker\phi \unlhd G$, which is $\Core_G(S)$ (the largest normal subgroup of $G$ contained in $S$). The quotient group $G/K\cong \img \phi \leq S_3$ is isomorphic to a transitive subgroup in $S_3$. The nontrivial transitive subgroups of $S_3$ are $A_3$ and $S_3$ itself. Since $S$ is nonnormal, $K < S$ is proper and so $|G/K| > 3$. Thus $G/K\cong S_3$. From \cite[Lem. 2.2]{Cohn} we know that $\beta(G) \geq \beta(G/K) \beta(K)$. It follows (see table \ref{tab:small}), that $\beta(G) \geq 8\beta(K) \geq 8|K|=8|G|/|G/K|=\frac86|G|=\frac43|G|$.
\end{proof}

\begin{lemma}\label{lem:NonAB2}
If $G$ is a nonabelian group with a self-normalising subgroup $S$ of index $[G:S]=4$, then $\beta(G) \geq \frac32 |G|$.
\end{lemma}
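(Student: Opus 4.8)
The plan is to run the index-$4$ analogue of the proof of Lemma~\ref{lem:NonAB}, using the self-normalising hypothesis to cut the possible images of the coset action down to just two groups. First I would form the right coset space $S\setminus G$ of size $[G:S]=4$ and the associated permutation homomorphism $\phi\colon G\to\Sym(S\setminus G)\cong S_4$ given by $(g,Sr)\mapsto Srg$. Exactly as before, its kernel is $K:=\ker\phi=\Core_G(S)\unlhd G$, the point stabiliser of the trivial coset $S$ is precisely $S$, and $G/K\cong\img\phi$ is a \emph{transitive} subgroup of $S_4$. Since $S$ is self-normalising and $[G:S]=4>1$, $S$ is not normal, so $K\subsetneq S$ is proper and hence $|G/K|>4$.

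The key step is to turn the self-normalising hypothesis into a constraint on $\img\phi$ and then enumerate. Using $K\le S$ and the standard identity $N_{G/K}(S/K)=N_G(S)/K$, the point stabiliser $\bar S:=S/K$ satisfies $N_{\img\phi}(\bar S)=\bar S$, i.e. it is self-normalising in $\img\phi$. The convenient reformulation is that for a transitive action with stabiliser $H$ one has $|\mathrm{Fix}(H)|=[N(H):H]$, so $\bar S$ being self-normalising is equivalent to $\bar S$ fixing \emph{only} the point corresponding to $S$. I would then run through the transitive subgroups of $S_4$, namely $C_4$, the normal Klein four group $V_4$, $D_4$, $A_4$ and $S_4$: for $C_4$ and $V_4$ the stabiliser is trivial and fixes all four points, and for $D_4$ the order-$2$ stabiliser fixes two points, so none of these has a self-normalising stabiliser; only $A_4$ (stabiliser a Sylow $3$-subgroup, self-normalising by Sylow) and $S_4$ (stabiliser an $S_3$ fixing a single point) survive. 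Hence $G/K\cong A_4$ or $G/K\cong S_4$.

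Finally I would feed this into the submultiplicativity bound $\beta(G)\ge\beta(G/K)\,\beta(K)\ge\frac{\beta(G/K)}{|G/K|}\,|G|$ from \cite[Lem.~2.2]{Cohn}, using $\beta(K)\ge|K|=|G|/|G/K|$. Reading $\beta(A_4)$ and $\beta(S_4)$ off table~\ref{tab:small} yields $\beta(A_4)/12\ge\frac32$ and $\beta(S_4)/24\ge\frac32$, so in either case $\beta(G)\ge\frac32|G|$, as claimed; note that $\frac32$ is exactly the worse of the two ratios $\beta(A_4)/|A_4|$ and $\beta(S_4)/|S_4|$.

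I expect the main obstacle to be the case analysis of the second paragraph: one must be certain the list $C_4,V_4,D_4,A_4,S_4$ of transitive subgroups of $S_4$ is complete and must correctly count the fixed points of each point stabiliser, since it is precisely this computation that discards the ``small'' images $C_4,V_4,D_4$ and isolates $A_4$ and $S_4$. The only other genuine dependency is the availability of the exact capacities $\beta(A_4)$ and $\beta(S_4)$ from the table, which govern the numerical value $\frac32$ in the conclusion.
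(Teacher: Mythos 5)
Your proof is correct and takes essentially the same route as the paper: the coset action on $S\setminus G$ with kernel $K=\Core_G(S)$, identification of $G/K$ with a transitive subgroup of $S_4$, elimination of every candidate except $A_4$ and $S_4$, and the bound $\beta(G)\geq\beta(G/K)\beta(K)$ combined with the table values $\beta(A_4)/|A_4|=\beta(S_4)/|S_4|=3/2$. Your fixed-point criterion $|\mathrm{Fix}(H)|=[N(H):H]$ is a clean way of making explicit the step the paper only asserts, namely that self-normalisation of $S$ rules out the image $D_8$.
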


\begin{proof}
The subgroup $S$ has a normal core $K:=\Core_G(S)\unlhd G$ which is proper $K < S$, such that the quotient group $G/K$ is of order $|G/K|>|G/S|=4$. The core $K$ is the kernel of the natural homomorphism $\phi\colon G\to S_4$ which describes the permutation action of $G$ on $G/S$, and so $G/K\cong \img \phi \leq S_4$ is a transitive subgroup of order greater than $4$. The only possibilities are $A_4$, $D_8$ and $S_4$. Because we assume that $S$ is self-normalising, the case $G/K\cong D_8$ is not feasible. The statement follows from $\beta(A_4)/|A_4|=\beta(S_4)/|S_4|=3/2$ (see table \ref{tab:small}) like in the proof of lemma \ref{lem:NonAB}.
\end{proof}

\section{Triple Product Property Test Algorithms}
\label{sec:TPPtests}

\noindent At the moment there are five algorithms to test the TPP. In this section we will present them and compare their running time. 

We start with two algorithms that directly came from the TPP definition. We define $L:=Q(S)Q(T)Q(U)$ as a list, not as a set. Then we count, how many $1$'s are in it. The TPP is fulfilled iff there is only one $1$ in $L$.
\begin{algo}\texttt{TPPTestNaiv( S, T, U )}

\hrule
\noindent\verb|OUTPUT: TPP fulfilled: true / false|

\noindent\verb|  |$\mathtt{L:=Q(S) Q(T) Q(U)}$\\
\verb|  if ( #{ |$\mathtt{1_G\in L}$\verb| } |$\mathtt{>1}$\verb| ) then|\\
\verb|    return false;|\\
\verb|  fi;|\\
\verb|  return true;|\hrule
\end{algo}
The algorithm above is very naive, because we use the \emph{complete} list $L$. This needs a lot of time and memory. It is a better idea to search element-wise for $1$'s.
\begin{algo}\label{algo:TPPTest}
\texttt{TPPTest( S, T, U )}

\hrule
\noindent\verb|OUTPUT: TPP fulfilled: true / false|

\noindent\verb|  |$\mathtt{Q_S:=Q(S)}$\verb|, |$\mathtt{Q_T:=Q(T)}$\verb|, |$\mathtt{Q_U:=Q(U)}$\\
\verb|  for |$\mathtt{s \in Q_S}$\verb| do|\\
\verb|    for |$\mathtt{t \in Q_T}$\verb| do|\\
\verb|      for |$\mathtt{u \in Q_U}$\verb| do|\\
\verb|        if( |$\mathtt{stu=1}$\verb| and ( |$\mathtt{s\neq 1}$\verb| or |$\mathtt{t\neq 1}$\verb| or |$\mathtt{u\neq 1}$\verb| ) ) then|\\
\verb|          return false;|\\
\verb|        fi;|\\
\verb|  od; od; od;|\\
\verb|  return true;|\hrule
\end{algo}

From \textsc{Hendrik Orem} we know the following equivalent form of the TPP:
\begin{lemma}\textup{\cite[Thm. 2.1]{Orem}}
Subsets $S$, $T$ and $U$ of $G$ satisfy the TPP iff
\[
|S^{-1}|\cdot |U|=|S^{-1}U| \qquad\text{and}\qquad (S^{-1}(Q(T)\setminus 1) U) \cap S^{-1}U = \emptyset.
\]
\end{lemma}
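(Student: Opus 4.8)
The plan is to prove the biconditional characterization of \textsc{Orem} directly from the \textsc{Cohn}--\textsc{Umans} definition of the TPP, translating the two stated equations into statements about the vanishing of triple quotient products. The strategy is to interpret each condition set-theoretically: the first equation, $|S^{-1}|\cdot|U|=|S^{-1}U|$, asserts that the natural map $S^{-1}\times U \to S^{-1}U$, $(s^{-1},u)\mapsto s^{-1}u$, is injective; the second asserts that no element of the form $s^{-1}(Q(T)\setminus 1)u'$ lands back in $S^{-1}U$. I would first unwind what it means for the TPP to \emph{fail}, namely that there exist $s\in Q(S)$, $t\in Q(T)$, $u\in Q(U)$, not all equal to $1$, with $stu=1$, and then split into cases according to which of the three quotient elements is nontrivial.

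First I would establish the forward direction. Assume $S$, $T$, $U$ satisfy the TPP. For the first equation, suppose $s_1^{-1}u_1=s_2^{-1}u_2$ with $s_i\in S$, $u_i\in U$. Rearranging gives $(s_2 s_1^{-1})(1)(u_1 u_2^{-1})=1$ with $s_2 s_1^{-1}\in Q(S)$ and $u_1 u_2^{-1}\in Q(U)$, so the TPP (taking the middle factor to be $1\in Q(T)$, using (*)) forces $s_2 s_1^{-1}=u_1 u_2^{-1}=1$, hence $s_1=s_2$ and $u_1=u_2$; thus the map is injective and $|S^{-1}U|=|S^{-1}|\cdot|U|$. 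For the second equation, suppose for contradiction that some $s_1^{-1}t u_1\in S^{-1}U$ with $t\in Q(T)\setminus 1$, say $s_1^{-1}t u_1=s_2^{-1}u_2$. Rearranging yields a relation $(s_2 s_1^{-1})\,t\,(u_1 u_2^{-1})=1$ in $Q(S)\,Q(T)\,Q(U)$; since $t\neq 1$, this is a nontrivial TPP violation, a contradiction. Hence the intersection is empty.

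For the converse I would assume both equations hold and show any triple quotient relation $stu=1$ forces $s=t=u=1$. Writing $s=s_2 s_1^{-1}$, $t\in Q(T)$, $u=u_1 u_2^{-1}$ with $s_i\in S$, $u_j\in U$, the relation $stu=1$ rearranges to $s_1^{-1}t u_1=s_2^{-1}u_2\in S^{-1}U$. The second equation then forces $t=1$ (otherwise $t\in Q(T)\setminus 1$ and we get a forbidden element of $S^{-1}U$). With $t=1$ the relation reduces to $s_1^{-1}u_1=s_2^{-1}u_2$, and the injectivity guaranteed by the first equation gives $s_1=s_2$ and $u_1=u_2$, whence $s=u=1$. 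Thus the TPP holds.

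The main obstacle, and the point requiring the most care, is the bookkeeping of how an element of $Q(S)$ or $Q(U)$ is represented: an element $s\in Q(S)$ is \emph{some} product $s_2 s_1^{-1}$, and one must consistently move between the ``quotient'' viewpoint (elements of $Q(S)$, $Q(T)$, $Q(U)$) and the ``two-representative'' viewpoint (pairs $s_1,s_2\in S$) without losing the freedom of choice of representatives. In particular, in the second equation the set $S^{-1}U$ is indexed by genuine elements of $S$ and $U$, whereas $Q(T)\setminus 1$ is a set of quotients, so the rearrangement must keep the $S$- and $U$-parts as literal products of single group elements while treating the $T$-part as an abstract quotient. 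Handling the degenerate subcase where $t=1$ but one of $s,u$ is nontrivial is exactly what the first equation is designed to rule out, so the proof hinges on cleanly separating the role of the two equations: the second controls the nontrivial-$T$ case, the first controls the trivial-$T$ case.
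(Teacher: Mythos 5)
The paper never proves this lemma itself---it is quoted directly from Orem's thesis \cite[Thm.~2.1]{Orem}---so there is no in-paper argument to compare yours against; your proposal has to stand on its own, and it does. Both directions are correct. In the forward direction you invoke the TPP twice, and both invocations are legitimate: taking the middle factor to be $1\in Q(T)$ (valid since $T\neq\emptyset$, by (*)) turns any coincidence $s_1^{-1}u_1=s_2^{-1}u_2$ into the relation $(s_2s_1^{-1})\cdot 1\cdot(u_1u_2^{-1})=1$, forcing $s_1=s_2$, $u_1=u_2$ and hence $|S^{-1}|\cdot|U|=|S^{-1}U|$; and taking $t\in Q(T)\setminus 1$ turns any element of $(S^{-1}(Q(T)\setminus 1)U)\cap S^{-1}U$ into a nontrivial solution of $stu=1$, contradicting the TPP. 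In the converse, the pivot identity $s_2s_1^{-1}\,t\,u_1u_2^{-1}=1 \Leftrightarrow s_1^{-1}tu_1=s_2^{-1}u_2$ is stated and used correctly, and your case split cleanly assigns each hypothesis its job: the emptiness condition kills the case $t\neq 1$, and the cardinality condition (read as injectivity of $(x,u)\mapsto xu$ on $S^{-1}\times U$) handles $t=1$, giving $s=u=1$. Your closing worry about representative-dependence is not actually a gap: in the converse you only need \emph{some} representation $s=s_2s_1^{-1}$, $u=u_1u_2^{-1}$, and the conclusion $s=u=1$ is about the quotient elements themselves, so the argument is sound as written. This is, in substance, the natural (and presumably Orem's own) proof: the two equations are exactly the $t=1$ and $t\neq 1$ halves of the TPP, which is also why the paper's \texttt{TPPTestOrem} algorithm needs to compute only the single quotient set $Q(T)$.
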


Note that for a TPP test based on the lemma above we only need to compute one of the right quotients, instead of all three right quotients in the original TPP definition.

\begin{algo}\label{algo:TPPTestOrem}
\texttt{TPPTestOrem( S, T, U )}

\hrule
\noindent\verb|OUTPUT: TPP fulfilled: true / false|

\noindent\verb|  |$\mathtt{S_i := S^{-1}}$\verb|, |$\mathtt{S_{iu} := S_i U}$\\
\verb|  if( |$\mathtt{|S_i|\cdot |U|=|S_{iu}|}$\verb| ) then|\\
\verb|    if( |$\mathtt{S_i(Q(T) \setminus 1)U \cap S_{iu} = \emptyset}$\verb| ) then|\\
\verb|      return true;|\\
\verb|  fi; fi;|\\
\verb|  return false;|\hrule
\end{algo}

The fourth algorithm comes from the TPP reformulation of theorem \ref{lem:TPPold}, it requires a basic triple ($1\in S\cap T\cap U$) as input:

\begin{algo}\label{algo:TPPTestMurthy}
\texttt{TPPTestMurthy( S, T, U )}

\hrule
\noindent\verb|OUTPUT: TPP fulfilled: true / false|

\noindent\verb|  |$\mathtt{Q_T := Q(T)}$\verb|, |$\mathtt{Q_U := Q(U)}$\\
\verb|  if( |$\mathtt{Q_T \cap Q_U = 1}$\verb| ) then|\\
\verb|    |$\mathtt{Q_S := Q(S)}$\\
\verb|    if( |$\mathtt{Q_S \cap Q_TQ_U = 1}$\verb| ) then|\\
\verb|      return true;|\\
\verb|  fi; fi;|\\
\verb|  return false;|\hrule
\end{algo}

Now we focus on TPP tests for subgroups. We start with the TPP test inspired by theorem \ref{th:subtrans} about subtransversals.

\begin{algo}\texttt{TPPTestMurthyHedtkeGRP( S, T, U )}

\hrule
\noindent\verb|OUTPUT: TPP fulfilled: true / false|

\noindent\verb|  if( |$\mathtt{S \cap T = 1}$\verb| and |$\mathtt{S \cap U = 1}$\verb| ) then|\\
	\verb|    for |$\mathtt{X \in (S\setminus G)\setminus \{S\}}$\verb| do|\\
\verb|      if( |$\mathtt{|X \cap T| + |X \cap U| > 1}$\verb| ) then|\\
\verb|        return false;|\\
\verb|    fi; od;|\\
\verb|  else|\\
\verb|    return false;|\\
\verb|  fi;|\\
\verb|  return true;|\hrule
\end{algo}

In the case where $S$, $T$ and $U$ are subgroups, we have $Q(S)=S$, $Q(T)=T$ and $Q(U)=U$. Therefore the remaining test algorithms for subgroups are:
\begin{algo}\texttt{TPPTestNaivGRP( S, T, U )}
	
	\hrule
\noindent\verb|OUTPUT: TPP fulfilled: true / false|
	
\noindent\verb|  |$\mathtt{L:=S \cdot T \cdot U}$\\
	\verb|  if ( #{ |$\mathtt{1_G\in L}$\verb| } |$\mathtt{>1}$\verb| ) then|\\
	\verb|    return false;|\\
	\verb|  fi;|\\
	\verb|  return true;|\hrule
\end{algo}

\begin{algo}\texttt{TPPTestGRP( S, T, U )}
	
	\hrule
\noindent\verb|OUTPUT: TPP fulfilled: true / false|
	
\noindent\verb|  for |$\mathtt{s \in S}$\verb| do|\\
	\verb|    for |$\mathtt{t \in T}$\verb| do|\\
	\verb|      for |$\mathtt{u \in U}$\verb| do|\\
	\verb|        if( |$\mathtt{stu=1}$\verb| and ( |$\mathtt{s\neq 1}$\verb| or |$\mathtt{t\neq 1}$\verb| or |$\mathtt{u\neq 1}$\verb| ) ) then|\\
	\verb|          return false;|\\
	\verb|        fi;|\\
	\verb|  od; od; od;|\\
	\verb|  return true;|\hrule
\end{algo}

\begin{algo}\texttt{TPPTestOremGRP( S, T, U )}
	
	\hrule
\noindent\verb|OUTPUT: TPP fulfilled: true / false|
	
\noindent\verb|  |$\mathtt{S_{u} := S \cdot U}$\\
\verb|  if( |$\mathtt{|S|\cdot |U|=|S_{u}|}$\verb| ) then|\\
\verb|    if( |$\mathtt{S(T \setminus 1)U \cap S_{u} = \emptyset}$\verb| ) then|\\
\verb|      return true;|\\
\verb|  fi; fi;|\\
\verb|  return false;|\hrule
\end{algo}

\begin{algo}\texttt{TPPTestMurthyGRP( S, T, U )}
	
	\hrule
\noindent\verb|OUTPUT: TPP fulfilled: true / false|
	
\noindent\verb|  if( |$\mathtt{T \cap U = 1}$\verb| ) then|\\
	\verb|    if( |$\mathtt{S \cap T\cdot U = 1}$\verb| ) then|\\
	\verb|      return true;|\\
	\verb|  fi; fi;|\\
	\verb|  return false;|\hrule
\end{algo}

Now we compare the running time of the different TPP tests. For this we use the search algorithms described in the next section with the different test algorithms. We use a \textsf{GAP} (\cite{GAP}) implementation of the algorithms in this paper with the SONATA package (\cite{Sonata}) for the \texttt{Subgroups} routine. The interested reader can get the \textsf{GAP} codes from the first author via email.

For the subgroup test algorithms we run the brute-force search algorithm $10$ times (except for the groups of order $64$ and $96$, because there are more than $200$ such groups) for each nonabelian group of order less than $128$. The results in figure~\ref{fig:SubGroupAll} show the mean \emph{cumulative} running time of the search algorithm for all nonabelian groups \emph{up to} a given order. We recommend to use \texttt{TPPTestMurthyGRP}.

For the subset test algorithms we do the same, but only up to group order $20$ and only with 2 repeats to build the mean value. The results are shown in figure \ref{fig:SmallAll} (left). It is very obvious, that compared to the naive TPP test the algorithms \ref{algo:TPPTest}, \ref{algo:TPPTestOrem} and \ref{algo:TPPTestMurthy} are very fast. Therefore figure \ref{fig:SmallAll} (right) shows the details. Again, we recommend to use \texttt{TPPTestMurthy}.
\begin{figure}
\centering
\includegraphics[trim=0.7cm 9cm 1cm 9.6cm, clip, height=6.5cm]{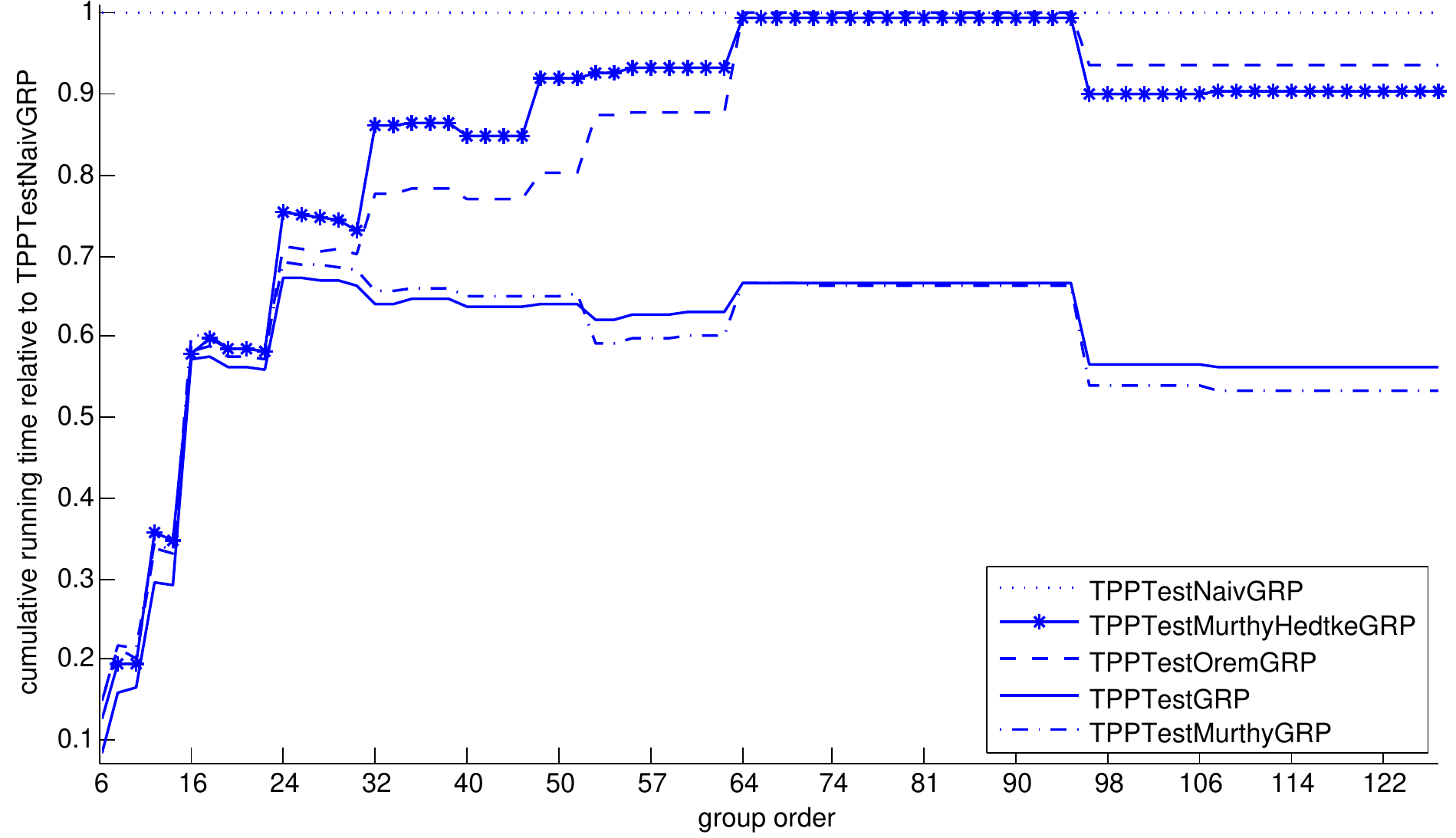}
\caption{\emph{Cumulative} running time \emph{relative} to \texttt{TPPTestNaivGRP} of the brute-force search for \emph{subgroups} with the five different test algorithms. (The vertical
axis is measuring the cumulative runtime of a sequence of 10 runs for
each of the five different TPP verification algorithms, averaged by 10, so
the closer the lines are to the zero line, the better.)}
\label{fig:SubGroupAll}
\end{figure}

\section{Brute-force Search}
\label{sec:SearchAlgos}
\begin{figure}[b]
\centering
\includegraphics[trim=4.6cm 9.7cm 5.3cm 10.2cm, clip, height=6cm]{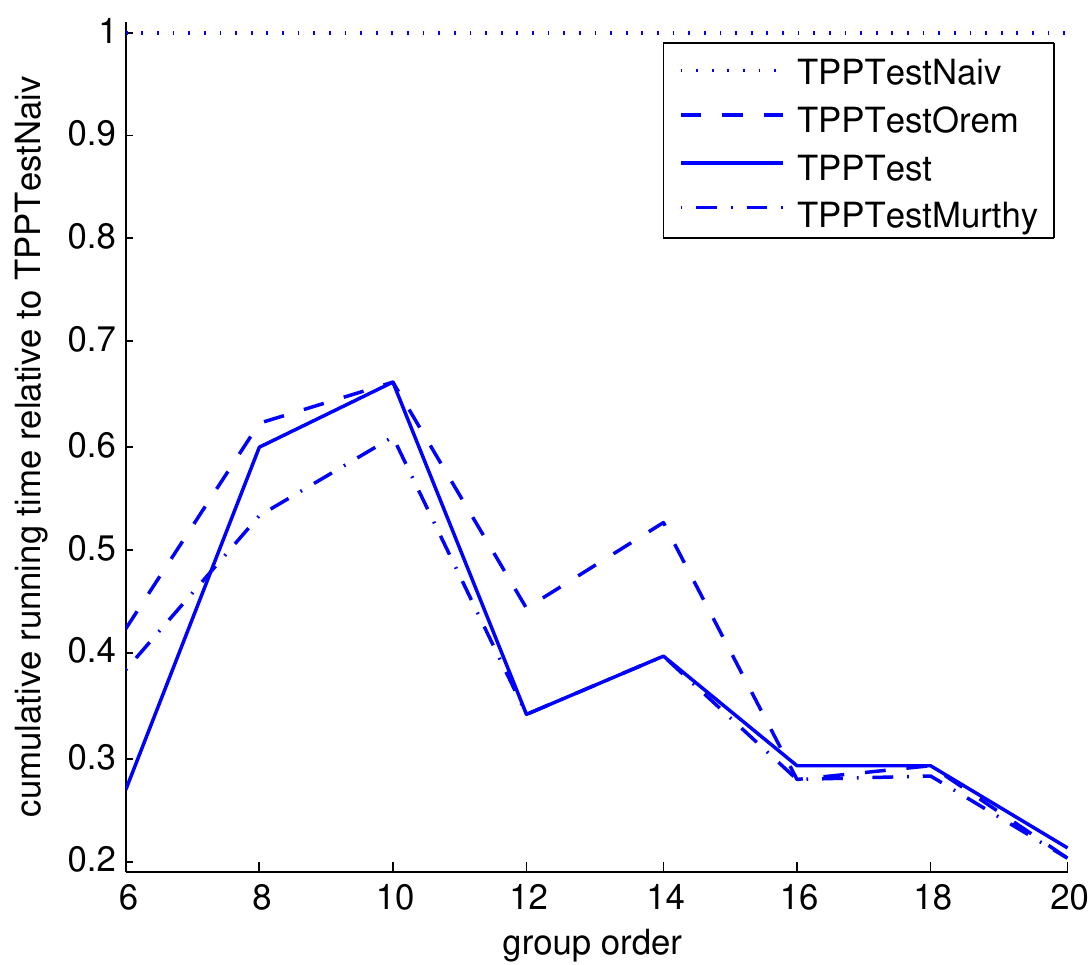}~~~~
\includegraphics[trim=5.2cm 10.3cm 6cm 10.8cm, clip, height=6cm]{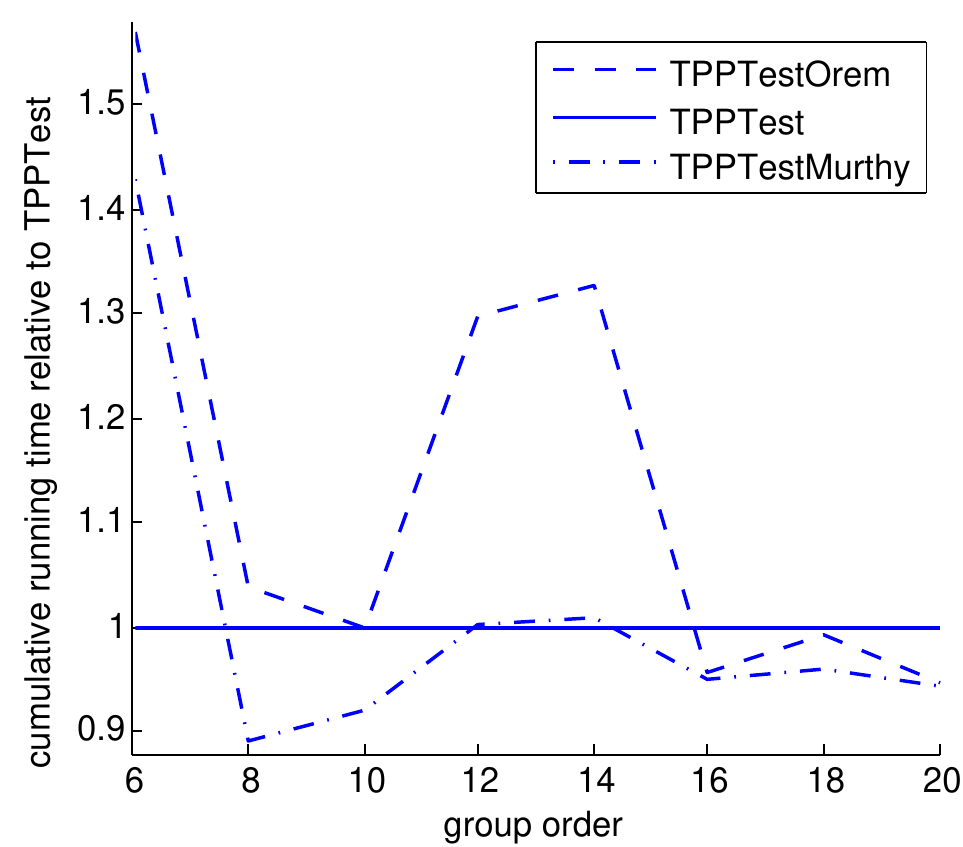}
\caption{Left: \emph{Cumulative} running time \emph{relative} to \texttt{TPPTestNaiv} of the brute-force search for \emph{subsets} with the four different test algorithms. Right: \emph{Cumulative} running time of \texttt{TPPTestOrem} and \texttt{TPPTestMurthy} \emph{relative} to \texttt{TPPTest}.}
\label{fig:SmallAll}
\end{figure}

\noindent In this section we describe the brute-force search algorithms that we use for our computations.
We use the observations from section \ref{sec:concepts} to reduce the search space for $S,T,U\subseteq G$ and the TPP tests from the last section.
Although it is relatively quick to search for subgroup TPP triples,
they do not fully describe the TPP capacity of a group, as the 
following theorem shows.

\begin{theorem}
For every finite group $G$, $\beta(G) \geq \beta_\mathrm{g}(G)$ holds.
There are groups with $\beta(G) > \beta_\mathrm{g}(G)$.
\end{theorem}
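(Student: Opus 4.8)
The first statement is immediate and I would dispose of it in a single line. By definition $\beta_\mathrm{g}(G)$ is the maximum of $|S|\cdot|T|\cdot|U|$ taken over those TPP triples $(S,T,U)$ in which $S$, $T$ and $U$ happen to be subgroups, whereas $\beta(G)$ is the maximum of the same quantity over \emph{all} TPP triples. Every subgroup triple is in particular a subset triple, so the set of attainable products in the subgroup case is contained in the set of attainable products in the general case, and the maximum of the larger set dominates that of the smaller one. Hence $\beta(G)\geq\beta_\mathrm{g}(G)$ for every finite $G$.

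\textbf{The separation.} For the existence of a group with $\beta(G)>\beta_\mathrm{g}(G)$ the plan is to exhibit a single explicit witness, consisting of a group $G$ together with a subset TPP triple whose product no subgroup triple can reach. The cleanest structural choice is a nonabelian group all of whose subgroups are normal (a Dedekind, i.e.\ Hamiltonian, group), because there Observation~\ref{OB:NONNORMAL} forbids any subgroup TPP triple of nontrivial size; combined with the fact that the trivial triple $(G,1,1)$ already attains $|G|$, this pins $\beta_\mathrm{g}(G)=|G|$ \emph{exactly}, with no enumeration required. It then remains only to produce a subset triple with $|S|\cdot|T|\cdot|U|>|G|$. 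Here the order must be chosen with care: for $Q_8$ (order $8$) \textsc{Neumann}'s inequality (Lemma~\ref{lemm:Neumann}) already excludes every size pattern of product exceeding $8$, so the smallest Hamiltonian group gives no gap. At order $16$ and above, however, patterns such as $(3,3,2)$ (product $18$) satisfy all three \textsc{Neumann} bounds, so I would take a Hamiltonian group of order in the range covered by the subset search of Section~\ref{sec:SearchAlgos}, for instance $Q_8\times C_2$ or $Q_8\times C_3$.

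Concretely, I would write down a candidate $(S,T,U)$, normalised to be basic ($1\in S\cap T\cap U$), and certify the TPP directly from Theorem~\ref{lem:TPPold} by checking the two conditions $Q(T)\cap Q(U)=1$ and $Q(S)\cap Q(T)Q(U)=1$; this verification is purely mechanical and is exactly what the criterion behind \texttt{TPPTestMurthy} automates. Any such triple yields $\beta(G)\geq|S|\cdot|T|\cdot|U|>|G|=\beta_\mathrm{g}(G)$, which is the desired strict inequality. If one prefers a non-Hamiltonian witness, the lower bound on $\beta$ is obtained in the same way from one explicit triple, but the upper bound on $\beta_\mathrm{g}$ must then be argued: by Observation~\ref{OB:NONNORMAL} one restricts to nonnormal subgroups, by Lagrange their orders divide $|G|$, and the normal-subgroup theorem together with \textsc{Neumann}'s inequality cuts the admissible size patterns down to finitely many subgroup triples, which are then checked exhaustively.

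\textbf{The main obstacle.} The asymmetry between the two halves is where the difficulty sits. Bounding $\beta(G)$ from below is trivial once a single separating subset triple is in hand, but the statement that $\beta_\mathrm{g}(G)$ is genuinely smaller is a \emph{universal} claim over all subgroup triples. The Hamiltonian route is attractive precisely because it makes this universal claim collapse to the one-line identity $\beta_\mathrm{g}(G)=|G|$; the remaining risk is only that the chosen Hamiltonian group might itself fail to have $\beta(G)>|G|$, so the creative step is locating — and then presenting explicitly — a non-subgroup triple whose product exceeds $|G|$. I expect this triple to be furnished by the brute-force subset search, whose tabulated values of $\beta(G)$ for all groups of order below $25$ can be compared directly against the subgroup capacities $\beta_\mathrm{g}(G)$; any group where the two disagree serves as the witness, and the accompanying explicit triple makes the separation checkable by hand via Theorem~\ref{lem:TPPold}.
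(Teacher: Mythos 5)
The inequality half of your proposal is exactly the paper's argument (inclusion of search spaces) and is fine. The separation half, however, has a genuine gap, and the paper's own data exposes it. Your primary route is to take a Hamiltonian group, use the normality theorem (Observation~\ref{OB:NONNORMAL}) to pin $\beta_\mathrm{g}(G)=|G|$ exactly, and then exhibit a subset triple of product exceeding $|G|$. But the only Hamiltonian groups of order less than $25$ besides $Q_8$ are precisely your two candidates, and Table~\ref{tab:small} shows $\beta(C_2\times Q_8)=16=|C_2\times Q_8|$ and $\beta(C_3\times Q_8)=24=|C_3\times Q_8|$: for both groups the subset capacity equals the group order, so \emph{no} separating subset triple exists. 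The risk you yourself flagged ("the chosen Hamiltonian group might itself fail to have $\beta(G)>|G|$") is realized for every candidate available to you, and the subset search data does not extend beyond order $24$, so the route cannot be rescued by moving to larger Hamiltonian groups either. Your fallback — scan the table for a group where $\beta$ and $\beta_\mathrm{g}$ disagree and certify an explicit triple via Theorem~\ref{lem:TPPold} — is the correct plan, and it is precisely what the paper does; but you never execute it: no group and no triple is actually written down, and an existential claim of this kind is not proved until the witness is on the page.

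For comparison, the paper's proof takes $G=D_{10}=\langle d,s : s^2=d^5=1,\ sds=d^{-1}\rangle$: the subgroup search gives $\beta_\mathrm{g}(D_{10})=10$, while the explicit subsets $S=\langle s\rangle$, $T=\{d,s\}$, $U=\{1,sd,d^3\}$ realize $\tens{2,2,3}$ of size $12$, whence $\beta(D_{10})\geq 12>10=\beta_\mathrm{g}(D_{10})$. Note that the upper bound on $\beta_\mathrm{g}(D_{10})$ can even be argued by hand along the lines you sketch for non-Hamiltonian witnesses: the nonnormal subgroups of $D_{10}$ are the five subgroups $\langle sd^i\rangle$ of order $2$, so by Observation~\ref{OB:NONNORMAL} any subgroup triple of product exceeding $10$ would consist of three order-$2$ subgroups and have product at most $8$, a contradiction; together with the trivial triple $(D_{10},1,1)$ this gives $\beta_\mathrm{g}(D_{10})=10$ without any enumeration. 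Had you carried out this check for $D_{10}$ (or any boldface row of Table~\ref{tab:small}) and verified the displayed triple through Theorem~\ref{lem:TPPold}, your argument would be complete; as it stands, the separation is asserted, not proved.
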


\begin{proof}
The first statement is trivial, because the search space for $\beta$ includes the one for $\beta_\mathrm{g}$.
For the second statement consider the group $D_{10}=\langle d,s : s^2=d^5=1, sds=d^{-1} \rangle$ of order $10$. From table \ref{tab:small} we know that $\beta_\mathrm{g}(D_{10})=10$. But the subsets $S:=\langle s \rangle$, $T:=\{d,s\}$ and $U:=\{1,sd,d^3\}$ of $D_{10}$ realize the problem $\tens{2,2,3}$ of size $12$, and so $\beta(D_{10})=12$.
\end{proof}

From the result above we see, that it is necessary so search for \emph{subset} TPP triples. Therefore we present search algorithms for either subgroup TPP triples and for subset TPP triples.

\subsection{Subgroup Search}

\noindent We use the following algorithm to search for subgroup TPP triples in a given nonabelian group.
Note that we use the \texttt{Subgroups} routine from the SONATA package and that the subgroups returned from that command are ordered ascending by the sizes of the subgroups.
Our command \texttt{NonnormalSubgroups} is only a filter for the nonnormal \texttt{Subgroups} of $G$.

\begin{algo}\texttt{TestGRP( G )}

\hrule
\noindent\verb|OUTPUT: subgroups |$\mathtt{S_{max}}$\verb|, |$\mathtt{T_{max}}$\verb|, |$\mathtt{U_{max}}$\verb| that realize a problem of size |$\beta_\mathrm{g}$\verb|(G)|

\noindent\verb| 1  |$\mathtt{n_{max}}$\verb+:=|G|, +$\mathtt{S_{max}}$\verb+:=G, +$\mathtt{T_{max}}$\verb+:={+$\mathtt{1_G}$\verb+}, +$\mathtt{U_{max}}$\verb+:={+$\mathtt{1_G}$\verb+}+\\
\verb+ 2  C:=NonnormalSubgroups(G); a:=|C|;+\hfill\emph{(observation \ref{OB:NONNORMAL})}\\
\verb# 3  lastS:=| { c#$\in$\verb#C : |c| < #$\ell(\mathtt{G})$\verb# } | + 1;#\hfill\emph{(observation \ref{ob:lower})}\\
\verb+ 4  for i in [a, a-1, ..., lastS] do+\hfill\emph{(observation \ref{ob:lower})}\\
\verb+ 5    S:=C[i];+\\
\verb+ 6    for j in [i-1, i-2, ..., 1] do+\hfill\emph{(observation \ref{ob:Order})}\\
\verb+ 7      T:=C[j];+\\
\verb+ 8      if ( S +$\cap$\verb+ T = 1 ) then+\hfill\emph{(observation \ref{ob:Intersec})}\\
\verb+ 9        for k in [j-1, j-2, ..., 1] do+\hfill\emph{(observation \ref{ob:Order})}\\
\verb+10          U:=C[k];+\\
\verb#11          if ( |S|#$\mathtt \cdot$\verb#(|T| + |U| - 1) #$\leq$\verb+ |G| ) then+\hfill\emph{(observation \ref{ob:Neumann})}\\
\verb#12            if ( |S|#$\mathtt \cdot$\verb#|T|#$\mathtt \cdot$\verb#|U| > #$\mathtt{n_{max}}$\verb+ ) then+\hfill\emph{(observation \ref{ob:MinimumBeta})}\\
\verb+13              if ( T +$\cap$\verb+ U = 1 and S +$\cap$\verb+ U = 1) then+\hfill\emph{(observation \ref{ob:Intersec})}\\
\verb+14                if TPPfulfilled(S,T,U) then+\\
\verb+15                  +$\mathtt{n_{max}}$\verb+:=|S|+$\mathtt \cdot$\verb+|T|+$\mathtt \cdot$\verb+|U|; +$\mathtt{S_{max}}$\verb+:=S; +$\mathtt{T_{max}}$\verb+:=T; +$\mathtt{U_{max}}$\verb+:=U; +\\
\verb+16                  break;+\\
\verb+17              fi; fi;+\\
\verb+18            else+\\
\verb+19              break;+\\
\verb+20  fi; fi; od; fi; od; od; return +$\mathtt{S_{max}}$\verb|, |$\mathtt{T_{max}}$\verb|, |$\mathtt{U_{max}}$\verb+;+\hrule
\end{algo}

For the command \texttt{TPPfulfilled} in line 14 of the code we use one of the TPP tests for subgroups described in the previous section. The \texttt{break} command in line 16 is used because all other $U$ that occur in the inner \texttt{for} loop have a size smaller or equal than $U_\mathrm{max}$ and we don't need to test them, because we will not get a bigger TPP triple. The \texttt{break} command in line $19$ is used because the multiplicative size of all other TPP triples with an $U$ from the inner loop have a size smaller than $n_\mathrm{max}$.

\subsection{Subset Search}

\noindent The search for subset TPP triples is very similar to the subgroup search. But instead of the \texttt{Subgroups} routine we use a method to generate subsets of $G$. We followed an idea of \textsc{E. Burnett} (see \cite{Burnett}) to generate the subsets on the fly in our code. (We also had the idea to generate a binary representation of the subsets. Each subset of $G$ can be identified as an element of $\{0,1\}^{|G|}$. But it needs too much memory to generate the whole object $\{0,1\}^{|G|}$ and therefore we are glad, that we found a way to generate the subsets one after another.)

\begin{algo} \textsc{GAP} implementation of the \texttt{GenerateSubset} method from \textsc{E. Burnett}\hrule

\noindent\verb#GenerateSubset := function ( OrderG , SizeS , NumberS )#\\
\verb#  local BoolVector, Offset, NumberZeros, Low, High;#\\~\\
\verb#  BoolVector:=[]; Offset:=0; NumberS:=NumberS-1;#\\
\verb#  #\\
\verb#  while true do#\\
\verb#    NumberZeros:=0; Low:=0; High:=Binomial(OrderG-1, SizeS-1);#\\
\verb#    #\\
\verb#    while (NumberS >= High) do#\\
\verb#      NumberZeros:=NumberZeros+1; Low:=High;#\\
\verb#      High:=High+Binomial(OrderG-NumberZeros-1, SizeS-1);#\\
\verb#    od;#\\
\verb#    #\\
\verb#    Add(BoolVector, Offset+NumberZeros+1);#\\
\verb#    #\\
\verb#    if (SizeS = 1) then#\\
\verb#      return BoolVector;#\\
\verb#    else#\\
\verb#      OrderG:=OrderG-NumberZeros-1; SizeS:=SizeS-1; NumberS:=NumberS-Low;#\\
\verb#      Offset:=Offset+NumberZeros+1;#\\
\verb#    fi;#\\
\verb#  od;#\\
\verb#end;#\hrule
\end{algo}

The differences to the subgroup search algorithm are: First we compute the set $W$ of all possible values of $|S|$, $|T|$ and $|U|$, that means $W:=\{2,\ldots,|G|-1\}^3$. Then we use all observations from section \ref{sec:concepts} to reduce the search space. Now we order the elements of $W$ decreasing by their multiplicative size (we use this to stop the algorithm in the case that we found a TPP triple, because it is the biggest TPP triple of $G$). In the outer \texttt{for} loop we go through all $i\in W$ and generate in the inner loops the $\binom{|G|-1}{i_1-1}$ subsets $S$ that contain the identity of $G$ and the $U$'s and $T$'s in the same way.

\begin{verbatim}
for i in W do
  for NrS in [1..Binomial(Size(G)-1,i[1]-1)] do
    IndexS := GenerateSubset(Size(G),i[1],NrS);
    for NrT in [1..Binomial(Size(G)-1,i[2]-1)] do
      IndexT := GenerateSubset(Size(G),i[2],NrT);
      if Size(Intersection(IndexS,IndexT)) = 1 then
        for NrU in [1..Binomial(Size(G)-1,i[3]-1)] do
          IndexU := GenerateSubset(Size(G),i[3],NrU);
          if (Size(Intersection(IndexS,IndexU)) = 1 
             and Size(Intersection(IndexT,IndexU)) = 1) then
            S := AsList(G){IndexS}; ...
\end{verbatim}
Of course one has to use a TPP test for subsets (see the previous section).

\section{Selected Results}

\noindent In this section we present some computational results of the brute-force search with subsets and subgroups. The values in the following tables are:
\begin{itemize}
\item \textsf{GAP} Id: The \texttt{IdSmallGroup} in the SmallGroups Library.
\item $D_3(G)$ resp. $D_3$: The $3$-character capacity (see definition \ref{def:DG}).
\item $\beta(G)$ resp. $\beta$: The TPP capacity (see definition \ref{def:TPPcap}).
\item $\beta_\mathrm{g}(G)$ resp. $\beta_\mathrm{g}$: The TPP subgroup capacity (see definition \ref{def:TPPcap}).
\item $\beta/|G|$ resp. $\beta_\mathrm{g}/|G|$: The so called \emph{TPP ratio} or \emph{TPP subgroup ratio}, resp.: A parameter that measures the size of the realized problem in relation to the group size.
\item $\tens{n, p, m}$: The parameters of the biggest realized problem.
\end{itemize}

\subsection{Computational Results for some Small Groups}
\noindent Our search algorithms allow us to compute all the relevant information 
about TPP triples of subsets for all nonabelian groups of order less than 
$25$. The results are shown in table \ref{tab:small} (computing TPP triples of subsets 
in groups of order greater than $24$ is too time consuming at present).

\begin{table}
\centering
\begin{tabular}{|l|l|l|l|l|l|l|l|}
\hline
\bf \textsf{GAP} Id &\bf  Group  & $\boldsymbol{D_3(G)}$ & $\boldsymbol{\beta(G)}$ & $\boldsymbol{\beta_\mathrm{g}(G)}$  & $\boldsymbol{\beta/D_3}$ & $\boldsymbol{\beta/|G|}$ & $\boldsymbol{\tens{n,p,m}}$\\\hline\hline
$[6, 1]$ 	& $S_3$  							& 10 	& 8 	& 8 			& 0.8 		& 1.33333 	&$2,2,2$\\\hline
$[ 8, 3 ]$ 	& $D_8$  							& 12 	& 8 	& 8 			& 0.666667 	& 1 		&$2,2,2$\\
$[ 8, 4 ]$ 	& $Q_8$ 							& 12 	& 8 	& 8 			& 0.666667 	& 1 		&$2,2,2$\\\hline
$[ 10, 1 ]$	&	$D_{10}$ 						& 18 	& \bf 12 	& \bf 10 			& 0.666667 	& 1.2 		&$3,2,2$\\\hline
$[12,1]$ 	& $C_3 \rtimes C_4$ 				& 20 	& \bf 16 	& \bf 12 		& 0.8 		& 1.33333 	&$4,2,2$\\
$[12,3]$ 	& $A_4$ 							& 30 	& 18 	& 18 	 		& 0.6 		& 1.5 		&$3,3,2$\\
$[12,4]$  	& $D_{12}$ 							& 20 	& 16 	& 16 	 		& 0.8 		& 1.33333 	&$4,2,2$\\\hline
$[14,1]$ 	& $D_{14}$ 							& 26 	& \bf 16 	& \bf 14 	 	& 0.615385 	& 1.14286 	&$4,2,2$\\\hline
$[16,3]$ 	& $(C_4 \times C_2)\rtimes C_2$ 	& 24 	& 16 	& 16 	 	& 0.666667 	& 1 		&$4,2,2$\\
$[16,4]$ 	& $C_4 \rtimes C_4 $				& 24 	& 16 	& 16 	 		& 0.666667 	& 1 		&$4,2,2$\\
$[16,6]$ 	& $C_8 \rtimes C_2 $				& 24 	& 16 	& 16 	 		& 0.666667 	& 1 		&$4,2,2$\\
$[16,7]$ 	& $D_{16}$							& 28 	& \bf 20 	& \bf 16 	 		& 0.714286 	& 1.25 		&$5,2,2$\\
$[16,8]$ 	& $QD_{16}$							& 28 	& 16 	& 16 	 		& 0.571429 	& 1 		&$4,2,2$\\
$[16,9]$ 	& $Q_{16}$							& 28 	& 16 	& 16 	 		& 0.571429 	& 1 		&$4,2,2$\\
$[16,11]$ 	& $C_2 \times D_8$					& 24 	& 16 	& 16 	 		& 0.666667 	& 1 		&$4,2,2$\\
$[16,12]$ 	& $C_2 \times Q_8$					& 24 	& 16 	& 16 	 		& 0.666667 	& 1 		&$4,2,2$\\
$[16,13]$ 	& $(C_4 \times C_2)\rtimes C_2$		& 24 	& 16 	& 16 	 		& 0.666667 	& 1 		&$4,2,2$\\\hline
$[18,1]$ 	& $D_{18}$							& 34 	& 24 	& 24 	 		& 0.705882 	& 1.33333 	&$6,2,2$\\
$[18,3]$ 	& $C_3 \times S_3$					& 30 	& 24 	& 24 	 		& 0.8 		& 1.33333 	&$6,2,2$\\
$[18,4]$ 	& $C_3^2\rtimes C_2$				& 34 	& 24 	& 24 	 		& 0.705882 	& 1.33333 	&$6,2,2$\\\hline
$[20,1]$ 	& $C_5 \rtimes C_4$					& 36 	& \bf 24 	& \bf 20 			& 0.666667 	& 1.2 		&$6,2,2$\\
$[20,3]$ 	& $C_5 \rtimes C_4$					& 68 	& 32 	& 32 	 		& 0.470588 	& 1.6 		&$4,4,2$\\
$[20,4]$ 	& $D_{20}$							& 36 	& \bf 24 	& \bf 20 			& 0.666667 	& 1.2 		&$6,2,2$\\\hline
$[21,1]$ 	& $C_7 \rtimes C_3$					& 57 	& 27 	& 27 	 		& 0.473684 	& 1.28571 	&$3,3,3$\\\hline
$[22,1]$ 	& $D_{22}$							& 42 	& \bf 28 	& \bf 22 	 	& 0.666667 	& 1.27273 	&$7,2,2$\\\hline
$[24,1]$ 	& $C_3 \rtimes C_8$					& 40 	& \bf 32 	& \bf 24 	 	& 0.8 		& 1.33333 	&$4,4,2$\\
$[24,3]$ 	& $\SL_2\ff_3$						& 54 	& 36 	& 36 	 		& 0.666667 	& 1.5 		&$4,3,3$\\
$[24,4]$ 	& $C_3 \rtimes  Q_8$				& 44 	& \bf 32 	& \bf 24	 	& 0.727273 	& 1.33333 	&$4,4,2$\\
$[24,5]$ 	& $C_4 \times  S_3$					& 40 	& 32	& 32 	 		& 0.8 		& 1.33333 	&$4,4,2$\\
$[24,6]$ 	& $D_{24}$							& 44 	& 32 	& 32	 		& 0.727273 	& 1.33333 	&$4,4,2$\\
$[24,7]$ 	& $C_2 \times (C_3 \rtimes C_4)$	& 40 	& \bf 32 	& \bf 24	 	& 0.8 		& 1.33333 	&$4,4,2$\\
$[24,8]$ 	& $(C_6 \times C_2) \rtimes C_2$	& 44 	& 32 	& 32	 		& 0.727273 	& 1.33333 	&$4,4,2$\\
$[24,10]$ 	& $C_3 \times D_8$					& 36 	& 24 	& 24	 		& 0.666667 	& 1 		&$6,2,2$\\
$[24,11]$ 	& $C_3 \times Q_8$					& 36 	& 24 	& 24 	 		& 0.666667 	& 1 		&$6,2,2$\\
$[24,12]$ 	& $S_4$								& 64 	& 36 	& 36 	 		& 0.5625 	& 1.5 		&$4,3,3$\\
$[24,13]$ 	& $C_2 \times A_4$					& 60 	& 36 	& 36 	 		& 0.6 		& 1.5 		&$4,3,3$\\
$[24,14]$ 	& $C_2 \times C_2 \times S_3$		& 40 	& 32 	& 32 	 		& 0.8 		& 1.33333 	&$4,4,2$\\\hline
\end{tabular}
\caption{Computational results for all nonabelian groups of order less than 25. (If $\beta$ and $\beta_\mathrm{g}$ differ, the corresponding values are printed bold.)}
\label{tab:small}
\end{table}

\subsection{Computational Results for some $2$-Groups}
\noindent An interesting point is, that the only groups we found with the brute-force search for subgroups that achieve $\beta /D_3 > 0.8$ are $2$-groups. The results for groups of order $64$ and $128$ are shown in table \ref{tab:Two}. (The results for all nonabelian groups of order $256$ are still in computation.)  Details about the subgroup search can be found in subsection \ref{subsec:further}.

\begin{table}
\centering
\begin{tabular}{|l|l|l|l|l|l|l|}
\hline
\bf \textsf{GAP} Id & \bf  Group  & $\boldsymbol{D_3(G)}$ & $\boldsymbol{\beta_\mathrm{g}(G)}$  & $\boldsymbol{\beta_\mathrm{g}/D_3}$ & $\boldsymbol{\beta_\mathrm{g}/|G|}$ & $\boldsymbol{\tens{n,p,m}}$\\\hline\hline
$[64,226]$ & $D_8^2$ & 144 & 128 & 0.888889 & 2 & $8,4,4$\\
\hline
$[128,29]$ & $(C_2 \times (C_8\rtimes C_2))\rtimes C_4$ & 304 & 256 & 0.842105 & 2 & $16,4,4$\\
$[128,1135]$ & $(C_2^3 \times D_8) \rtimes C_2$ & 304 & 256 & 0.842105 & 2 & $16,4,4$\\
$[128,1142]$ & $(C_2^3 \times D_8) \rtimes C_2$ & 304 & 256 & 0.842105 & 2 & $8,8,4$\\
$[128,1165]$ & $(C_2^3 \times D_8) \rtimes C_2$ & 304 & 256 & 0.842105 & 2 & $8,8,4$\\
$[128,2194]$ & $C_2 \times D_8^2$ & 288 & 256 & 0.888889 & 2 & $16,4,4$\\
$[128,2213]$ & $(C_2 \times C_4 \times D_8) \rtimes C_2$ & 288 & 256 & 0.888889 & 2 & $16,4,4$\\\hline
\end{tabular}
\caption{Computational results for selected $2$-groups.}
\label{tab:Two}
\end{table}

\subsection{Selected Results for $\SL_n\ff_q$ and $\PSL_2\ff_q$}
\noindent We also tested some groups of type $\SL_n\ff_q$ and $\PSL_2\ff_q$. The results are shown in the tables \ref{tab:PSL} and \ref{tab:SL}. These are groups that realize a relatively high \emph{TPP subgroup ratio} $\rho(G):=\beta(G)/|G|$. No other tested groups obtain ratios bigger than $4$.

Furthermore we present a theoretical result about a lower bound for the TPP capacity of $\SL_3\ff_2$:

\begin{lemma}
The finite special linear group $\SL_3\ff_2$ realizes $\tens{8,7,7}$ via a TPP triple of
subgroups, and its TPP capacity has the lower bound $\beta(\SL_3\ff_2)\geq 392 = \frac73|\SL_3\ff_2|$.
\end{lemma}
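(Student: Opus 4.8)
The plan is to produce an explicit TPP triple of subgroups of sizes $8$, $7$ and $7$ and to read off the bound. First I would record the arithmetic: $\SL_3\ff_2$ coincides with the group $\operatorname{GL}_3(\ff_2)$ of all invertible $3\times 3$ matrices over $\ff_2$ and has order $(2^3-1)(2^3-2)(2^3-4)=168$, so that $8\cdot 7\cdot 7=392=\frac73\cdot 168$. Hence it suffices to exhibit subgroups $S,T,U\leq \SL_3\ff_2$ with $|S|=8$ and $|T|=|U|=7$ forming a TPP triple; then $\beta(\SL_3\ff_2)\geq \beta_\mathrm{g}(\SL_3\ff_2)\geq 392=\frac73|\SL_3\ff_2|$. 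Since $S$, $T$ and $U$ are subgroups, I would verify the TPP through the criterion obtained by specialising theorem~\ref{lem:TPPold} (equivalently the subtransversal characterisation of theorem~\ref{th:subtrans}): a triple of subgroups is a TPP triple iff $T\cap U=1$ and $S\cap TU=\{1\}$.

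For $S$ I would take a Sylow $2$-subgroup, concretely the group of upper unitriangular matrices (order $8$, isomorphic to $D_8$), and for $T$ and $U$ two \emph{distinct} Sylow $7$-subgroups, i.e. cyclic Singer subgroups of order $7$. Two of the conditions are then automatic: because $\gcd(|S|,|T|)=\gcd(8,7)=1$ we get $S\cap T=S\cap U=1$, which by the argument of theorem~\ref{th:subtrans}(i) already makes $T$ and $U$ subtransversals for $S$; and $T\cap U=1$ holds since $T\neq U$ are distinct subgroups of prime order. Thus everything reduces to condition~\eqref{eq:Green}, which here reads $S\cap TU=\{1\}$: the $49$-element product set $TU$ must avoid the seven nonidentity elements of $S$. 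A convenient framework is the natural $2$-transitive action of $\SL_3\ff_2\cong \PSL_2\ff_7$ on the $8$ points $\Omega=G/B$, where $B$ is a Borel subgroup of order $21$: one checks that $S$ acts \emph{regularly} on $\Omega$ (since $S\cap B=1$ forces an orbit of length $|S|=8$), and that the eight Sylow $7$-subgroups correspond bijectively to the points of $\Omega$, each fixing a unique point. In this language $\supp_{S\setminus G}(T)$ is the orbit of the trivial coset under right multiplication by $T$, and \eqref{eq:Green} asks that the orbits attached to $T$ and $U$ meet only in $S$.

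The main obstacle is producing a pair $(T,U)$ that actually satisfies $S\cap TU=\{1\}$, and here a purely averaging argument does \emph{not} suffice. Summing $|TU\cap(S\setminus 1)|$ over the $56$ ordered pairs of distinct Sylow $7$-subgroups amounts to counting pairs of order-$7$ elements whose product lands in $S\setminus 1$, and a short estimate shows this total exceeds $56$; so the mean number of collisions per pair is positive and one cannot deduce the existence of a collision-free pair by counting alone. I would therefore pin down $T$ and $U$ explicitly — for instance $T=\langle A\rangle$ for a companion matrix $A$ of an irreducible cubic over $\ff_2$ and $U=\langle wAw^{-1}\rangle$ for a suitable involution $w\in S$ — and verify the single remaining condition $S\cap TU=\{1\}$ as a finite check against the seven nonidentity elements of $S$ (this is exactly what the subgroup test \texttt{TPPTestMurthyGRP} performs). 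Once such a triple is in hand, $(S,T,U)$ realizes $\tens{8,7,7}$ and the stated bound $\beta(\SL_3\ff_2)\geq 392=\frac73|\SL_3\ff_2|$ follows immediately.
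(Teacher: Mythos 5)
Your reduction is exactly the paper's: take $S$ to be the unitriangular (Sylow $2$-) subgroup, take $T,U$ to be two distinct Sylow $7$-subgroups, observe that $S\cap T=S\cap U=1$ and $T\cap U=1$ come for free from coprimality and primality of the orders, and that by theorem~\ref{lem:TPPold} specialised to subgroups the only remaining condition is $S\cap TU=\{1\}$. Your negative observation about averaging is also correct, and can be made exact: a class-algebra computation in $\SL_3\ff_2\cong\PSL_2\ff_7$ shows that \emph{every} nonidentity $s\in S$ (whether of order $2$ or $4$) has exactly $16$ representations $s=tu$ with $t,u$ of order $7$, so $\sum |TU\cap(S\setminus 1)|$ over the $56$ ordered pairs of distinct Sylow $7$-subgroups equals $7\cdot 16=112$, i.e.\ two collisions per pair on average. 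In particular some pairs of distinct Sylow $7$-subgroups genuinely \emph{fail} the condition $S\cap TU=\{1\}$, so the existence of a good pair is not generic and cannot be waved through.

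That is precisely where your proposal stops short, and it is a genuine gap: the entire content of the lemma is the existence of one concrete good pair, and you only assert that $T=\langle A\rangle$, $U=\langle wAw^{-1}\rangle$ works ``for a suitable involution $w\in S$'', without naming $w$, without verifying $S\cap TU=\{1\}$ for any choice, and without even showing that some pair of this special conjugate form can work at all. The word ``suitable'' is carrying the whole lemma. The paper closes exactly this step by fixing explicit generators
\[
t_0=\begin{pmatrix}1&1&1\\ 1&0&0\\ 1&1&0\end{pmatrix},\qquad
u_0=\begin{pmatrix}0&0&1\\ 1&0&1\\ 1&1&1\end{pmatrix},
\]
both of order $7$, and checking the finite condition that every nontrivial product $t_0^iu_0^j$ (with $i,j\in\mathbb{Z}_7$ not both zero) is non-upper-triangular, hence lies outside $S$; together with $T\neq U$ this yields $T\cap U=1$ and $S\cap TU=1$, so $(S,T,U)$ is a basic TPP triple realising $\tens{8,7,7}$ and $\beta(\SL_3\ff_2)\geq\beta_\mathrm{g}(\SL_3\ff_2)\geq 392=\tfrac73|\SL_3\ff_2|$. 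Your argument becomes a complete proof only once you replace ``a suitable involution $w$'' by an explicit choice and actually carry out (or cite) the $48$-product check; as written, the decisive step is missing.
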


\begin{proof}
We write $G:=\SL_3\ff_2$ and define a proper subset $S\subset G$ by\[
S:=\left\{ \left[\protect\begin{array}{ccc}
1 & x & y\protect\\
0 & 1 & z\protect\\
0 & 0 & 1\protect\end{array}\right]\mbox{ }\Biggl|\mbox{ }x,y,z\in\mathbb{F}_{2}\right\} .\]
This is a subgroup of order $8$, consisting of all the upper unitriangular
matrices in $G$. Define two proper cyclic subgroups $T,U<G$ by \[
T:=\left\langle t_{0}:=\left[\protect\begin{array}{ccc}
1 & 1 & 1\protect\\
1 & 0 & 0\protect\\
1 & 1 & 0\protect\end{array}\right]\right\rangle ,\mbox{ }U:=\left\langle u_{0}:=\left[\protect\begin{array}{ccc}
0 & 0 & 1\protect\\
1 & 0 & 1\protect\\
1 & 1 & 1\protect\end{array}\right]\right\rangle .\]
The generators $t_{0}$ and $u_{0}$, as defined above, have order
$7$ in $G$, so $T$ and $U$ are cyclic of order $7$. We know that
$T\cap U=1$ (otherwise $t_{0}$ would generate
$U$, and we would have $T=U$, which is not true). In $TU:=\{t_{0}^{i}u_{0}^{j} : i,j\in\mathbb{Z}_{7}\}\subset G$ all nontrivial products $t_{0}^{i}u_{0}^{j}$, with $i,j\in\mathbb{Z}_{7}$
not both zero, are non-upper triangular matrices, and therefore $T,U,TU\subseteq (G\setminus S)\cup 1$.
Therefore, we have $S \cap TU=1$. Thus $(S,T,U)$ is a (basic) TPP triple of subgroups of $G$, and
by it $G$ realizes $\tens{8,7,7}$
of size $8\cdot 7\cdot 7=392$. The TPP capacity $\beta(G)$ is
at least its TPP capacity via subgroups $\beta_\mathrm{g}(G)$, which, by this result, is
at least $392$.
Since $\SL_3\ff_2=\PSL_3\ff_2=\PSL_2\ff_7$, this result is also true for these
latter groups.
\end{proof}

Note that it follows from the tables \ref{tab:PSL} and \ref{tab:SL} that $\beta_\mathrm{g}(\SL_3\ff_2)=392$ holds.

\begin{table}
\centering
\begin{tabular}{|l|r|r|r|l|l|l|}
\hline
\bf  Group  & $\boldsymbol{D_3(G)}$ &  $\boldsymbol{|G|}$ & $\boldsymbol{\beta_\mathrm{g}(G)}$  & $\boldsymbol{\beta_\mathrm{g}/D_3}$ & $\boldsymbol{\beta_\mathrm{g}/|G|}$ & $\boldsymbol{\tens{n,p,m}}$\\\hline\hline
$\PSL_2\ff_2$    & 10    & 6    & 8     & 0.8      & 1.33333 & $2,2,2$\\
$\PSL_2\ff_3$    & 30    & 12   & 18    & 0.6      & 1.5     & $3,3,2$\\
$\PSL_2\ff_4$    & 244   & 60   & 108   & 0.442623 & 1.8     & $12,3,3$\\
$\PSL_2\ff_5$    & 244   & 60   & 108   & 0.442623 & 1.8     & $12,3,3$\\
$\PSL_2\ff_7$    & 1126  & 168  & 392   & 0.348135 & 2.33333 & $8,7,7$\\
$\PSL_2\ff_8$    & 4072  & 504  & 1372  & 0.336935 & 2.72222 & $14,14,7$\\
$\PSL_2\ff_9$    & 3004  & 360  & 972   & 0.323569 & 2.7     & $12,9,9$\\
$\PSL_2\ff_{11}$ & 7038  & 660  & 1980  & 0.28133  & 3       & $55,6,6$\\
$\PSL_2\ff_{13}$ & 13556 & 1092 & 3276  & 0.241664 & 3       & $78,7,6$\\
$\PSL_2\ff_{17}$ & 40252 & 2448 & 10368 & 0.257577 & 4.23529 & $24,24,18$\\
$\PSL_2\ff_{19}$ & 63646 & 3420 & 14400 & 0.226251 & 4.21053 & $60,20,12$\\\hline
\end{tabular}
\caption{Computational results for selected groups of type $\PSL_2\ff_q$.}
\label{tab:PSL}
\end{table}
\begin{table}
\centering
\begin{tabular}{|l|r|r|r|l|l|l|}
\hline
\bf  Group  & $\boldsymbol{D_3(G)}$ &  $\boldsymbol{|G|}$ & $\boldsymbol{\beta_\mathrm{g}(G)}$  & $\boldsymbol{\beta_\mathrm{g}/D_3}$ & $\boldsymbol{\beta_\mathrm{g}/|G|}$ & $\boldsymbol{\tens{n,p,m}}$\\\hline\hline
$\SL_2\ff_2$ & 10   & 6   & 8    & 0.8      & 1.33333 & $2,2,2$\\
$\SL_2\ff_3$ & 54   & 24  & 36   & 0.666667 & 1.5     & $4,3,3$\\
$\SL_2\ff_4$ & 244  & 60  & 108  & 0.442623 & 1.8     & $12,3,3$\\
$\SL_2\ff_5$ & 540  & 120 & 216  & 0.4      & 1.8     & $24,3,3$\\
$\SL_2\ff_7$ & 2198 & 336 & 784  & 0.356688 & 2.33333 & $16,7,7$\\
$\SL_2\ff_8$ & 4072 & 504 & 1372 & 0.336935 & 2.72222 & $14,14,7$\\\hline
$\SL_3\ff_2$ & 1126 & 168 & 392  & 0.348135 & 2.33333 & $8,7,7$\\\hline
\end{tabular}
\caption{Computational results for selected groups of type $\SL_n\ff_q$.}
\label{tab:SL}
\end{table}

\subsection{Further Results}
\label{subsec:further}
\noindent At the moment many computations are still running on three supercomputers at the universities of Jena and Halle-Wittenberg. A first goal is to compute the TPP subgroup capacity $\beta_\mathrm{g}$ for all nonabelian groups of order up to 1000 (except 512 and 768). We also hope that there exists an algorithm that can effectively produce \emph{basic} subsets, because the computation of the TPP \emph{subset} capacity $\beta$ has a complexity of $\mathcal O(8^{|G|})$ without such an algorithm and it takes $8$ hours to compute $\beta$ for one group of order $24$.
All results (that means of the subset search for groups of order up to 24 and of the subgroup search of groups up to order 1000) can be found at \texttt{http://www2.informatik.uni-halle.de/da/hedtke/tpp/}. Note that some results are still missing, because they are in computation at the moment. 


\section*{Acknowledgements}
\noindent We would like to thank \textsc{David J. Green}, \textsc{Peter M. Neumann}, \textsc{Luke Oeding} and \textsc{Maxim Petrunin} for their helpful comments and suggestions. We also thank the Friedrich-Schiller-University Jena and the Martin-Luther-University Halle-Wittenberg which allowed us to use their supercomputers.

\bibliographystyle{amsalpha}

\end{document}